\newtheorem{theorem}{Theorem}[section]
\newtheorem{proposition}[theorem]{Proposition}
\newtheorem{definition}[theorem]{Definition}
\newtheorem{corollary}[theorem]{Corollary}
\newtheorem{lemma}[theorem]{Lemma}
\numberwithin{equation}{section}
\theoremstyle{remark}
\newtheorem{remark}[theorem]{Remark}
\newtheorem{example}[theorem]{\bf Example}
\newcommand{\R}{\mathbb{R}}
\begin{document}
\title{\bf{A characterization of the Ejiri torus in $S^{5}$}}
\author{Peng Wang}

\date{}
\maketitle

\begin{center}
{\bf Abstract}
\end{center}

Ejiri's torus in $S^5$ is the first example of Willmore surface which is not conformally equivalent to any minimal surface in any space forms. Li and Vrancken classified all Willmore surfaces of tensor product in $S^{n}$ by reducing them into elastic curves in $S^3$, and the Ejiri torus appeared as a special example. In this paper, we first prove that among all Willmore tori of tensor product, the Willmore functional of the Ejiri torus in $S^5$ attains the minimum $2\pi^2\sqrt{3}$. Then we show that all Willmore tori of tensor product are unstable when the co-dimension is big enough. We also show that the Ejiri torus is unstable in $S^5$.
Moreover, similar to Li and Vrancken, we classify all constrained Willmore surfaces of tensor product by reducing them with  elastic curves in $S^3$. All constrained Willmore tori obtained this way are also shown to be unstable when the co-dimension is big enough.

We conjecture that a Willmore torus having Willmore functional between $2\pi^2$ and $2\pi^2\sqrt{3}$ is either the Clifford torus, or the Ejiri torus (up to some conformal transform).
\\

{\bf Keywords:}  Willmore functional; Ejiri's Willmore torus; surfaces of tensor product; elastic curves; constrained Willmore surfaces.\\

{\bf MSC(2000):\hspace{2mm} 53A30, 53C15}

\section{Introduction}

For a surface $x:M\rightarrow S^n$ with  mean curvature $\vec{H}$ and Gauss curvature $K$, the Willmore functional can be defined as \[W(x)=\int_M (|\vec{H}|^2-K+1)dM.\]
Here $1$ is the curvature of $S^n$. Since Willmore \cite{Willmore1965} introduced this functional and his famous conjecture, it has played an important role in the study of global differential geometry and many new insights and methods have been developed for the proof of Willmore conjecture, see for example \cite{Li-y, Marques, Marques2} and reference therein.
 The Willmore conjecture states that it attains the minimum of Willmore functional among all tori in $S^3$, which has been shown to be true in \cite{Marques, Marques2} recently. As a consequence, this shows that Clifford torus is a stable Willmore torus in $S^3$.

 Examples of Willmore surfaces, the critical surfaces of Willmore functional, provide also many interesting objects in geometry.
 Minimal surfaces in space forms give important special examples of  Willmore surfaces \cite{Weiner, Ejiri1988}. In \cite{Ejiri1982}, Ejiri gave the first example of Willmore surface in $S^5$, which is not conformally equivalent to any minimal surface in any space forms. Later, in \cite{Pinkall1985} Pinkall provided the first examples of Willmore tori in $S^3$ which are different from minimal surfaces in space forms by the famous Hopf map and elastic curves.  Using  tensor product of curves in spheres, in \cite{Li-V}, Li and Vrancken derived many new examples of Willmore tori, including Ejiri's example as a special case.

In this paper, we show that the Ejiri torus can be viewed as the torus having lowest Willmore functional among all Willmore tori of tensor product.  To be concrete, by a simple observation, we prove that for all Willmore tori of tensor product, the Willmore functional $ W(y)\geq2\pi^2\sqrt{3}$, with the equality holding if and only the torus is the Ejiri torus (Theorem \ref{th-main}). This result also indicates that Ejiri torus is stable in the set of tori of tensor product in $S^5$.

 The Clifford torus is the first and the only known stable Willmore torus in $S^3$ \cite{Weiner} (See also  \cite{GLW}, \cite{Palmer}).
It is an interesting question that whether Clifford torus is the only stable Willmore torus in $S^n$ or not.
Note that if this is true, then Willmore conjecture follows directly.
 The un-stability of the Ejiri torus shown in this paper supports this conjecture partially. To be concrete, we construct a family of tori in $S^7$ containing the Ejiri torus, showing that the Ejiri torus is unstable. Similarly, we show that all Willmore tori derived by tensor product are unstable, when the co-dimension is big enough.
 Moreover, we also find a family of homogeneous tori in $S^5$ which contains the Ejiri torus and the Ejiri torus attains the maximal Willmore energy among them, which also shows the  un-stability of the Ejiri torus.

 Noticing that the Willmore functional of Ejiri  torus is very small, it is natural have a conjecture as below, which is true for Willmore tori of tensor product by Theorem \ref{th-main}:\vspace{5mm}\\
 { \bf Conjecture.} {\em Let $y$ be a Willmore torus $y$ in $S^n$, $n\geq 5$. If the Willmore functional $W(y)$ of $y$ satisfies
 \begin{equation}
 2\pi^2\leq W(y) \leq 2\pi^2\sqrt{3},
\end{equation}
then either $y$ is conformally equivalent to the Clifford torus with $W(y)=2\pi^2$, or  $y$ is conformally equivalent to the Ejiri torus with $W(y)= 2\pi^2\sqrt{3}$.\vspace{5mm}}

Note that $2\pi^2\sqrt{3}\approx 10.88 \pi$ is between $10\pi$ and $12\pi$. So one can derive by careful discussions that it is not possible to derive minimal torus in $\mathbb{R}^n$ with planer ends and with Willmore functional low than  $2\pi^2\sqrt{3}$. In fact for a minimal torus in $\mathbb{R}^n$ with planer ends to have total curvature $\geq-12\pi$, it has to have at most two ends. Then it has to be located in some $\mathbb{R}^4$ and has two ends. It is not possible to write down a Weierstrass representation via elliptic functions on a torus (See for example \cite{Bryant1984} and \cite{Costa} for similar discussions).

 We remark also that in \cite{GL}, they constructed a $\mathbb{CP}^3-$family Willmore tori in $S^4$ with Willmore functional $2\pi^2 n$. Here $n$ is the largest number of the Pythagorean triples $(p, q, n)\in \mathbb{Z}^3$.

The value distribution of Willmore two-spheres in $S^4$ has been proved to be $4\pi n$ in \cite{Bryant1984, Montiel}, $n\in \mathbb{Z}^+$. While the study of value distribution of Willmore tori are still very few. We hope this will induce more works on this direction.

We notice that in the construction of variations for the unstability of Willmore tori of tensor product, one can keep the conformal structure invariant. So such surfaces are in fact unstable under the variation with constraint. It is therefore natural to consider constrained Willmore surfaces of tensor product. For such surfaces, we obtain the similar results as Willmore tori of tensor product.

The paper is organized as follows. We will first recall the basic facts of surfaces in $S^n$ in Section 2 and then provide the proof in Section 3. Then we classify all constrained Willmore surfaces of tensor product in Section 4, which generalizes the result of Li and Vrancken \cite{Li-V}. Section 5 ends this paper by showing the un-stability of the Ejiri torus in $S^5$.

\section{Willmore surfaces in $S^n$}

The projective lightlike cone model of $S^n$ maps a point $x\in S^n$ into the projective lightlike cone $Q^n\subset \mathbb{R}P^{n+1}$ with
$Q^n=\{[X]|X=(1,x)\in \mathbb R^{n+2}, x\in S^n\}.$
Here $X$ is contained in the light cone $\mathcal{C}^{n+1}$ of $\mathbb R^{n+2}$ equipped with a Lorenzian metric
$\langle Y,Y\rangle=-y_{0}^2+y_1^2+\cdots+y_n^2$ for $Y=(y_0,y_1,\cdots,y_n)$, which we will denote by $\mathbb R^{n+2}_1$.

Let $y:M\rightarrow S^n$ be a conformal surface with $M$ a Riemann surface. Let $U\subset M$ be an open subset with complex coordinate $z$. Then  $< y_{z},y_{z}>=0 \hbox{ and } <
y_{z},y_{\bar{z}}> =\frac{1}{2}e^{2\omega}.$ Here $<\cdot,\cdot>$ is the standard inner product.
Then one has a natural lift of $y$ into $\mathbb R^{n+2}_1$ by
\[Y=e^{-\omega}(1,y): U\rightarrow \mathcal{C}^{n+1} \hbox{ with } |{\rm d}Y|^2=|{\rm d}z|^2,\] called a canonical lift with respect
to $z$. Moreover, there is a natural decomposition $M\times
\mathbb{R}^{n+2}_{1}=V\oplus V^{\perp}$, where
\begin{equation}
V={\rm Span}\{Y, Re(Y_z), Im(Y_z),Y_{z\bar{z}}\}
\end{equation}
is a Lorentzian rank-4 subbundle independent to the choice of $Y$
and $z$. Denoted by $V_{\mathbb{C}}=V\otimes \mathbb C$ and
$V^{\perp}_{\mathbb{C}}=V^{\perp}\otimes \mathbb C$. Let $\{Y,Y_{z},Y_{\bar{z}},N\}$ be a frame of
$V_{\mathbb{C}}$, with $N\in\Gamma(V)$ uniquely determined by
\begin{equation}\label{eq-N}
\langle N,Y_{z}\rangle=\langle N,Y_{\bar{z}}\rangle=\langle
N,N\rangle=0,\langle N,Y\rangle=-1.
\end{equation}
Since $Y_{zz}$ is orthogonal to
$Y$, $Y_{z}$ and $Y_{\bar{z}}$, there exists a local complex function $s$
and a local section $\kappa\in \Gamma(V_{\mathbb{C}}^{\perp})$ such that
\[Y_{zz}=-\frac{c}{2}Y+\kappa.\]
This defines two basic invariants $\kappa$ and $c$, \emph{the conformal Hopf differential} and
\emph{the Schwarzian} of $y$ (for more details, see \cite{BPP}). Let $D$ denote normal connection of $V_{\mathbb{C}}^{\perp}$ and $\psi\in
\Gamma(V_{\mathbb{C}}^{\perp})$  be a section of $V_{\mathbb{C}}^{\perp}$,
the structure equations are as follows:
\begin{equation}\label{eq-moving}
\left\{\begin {array}{lllll}
Y_{zz}&=&-\frac{c}{2}Y+\kappa,  \\
Y_{z\bar{z}}&=&-\langle \kappa,\bar\kappa\rangle Y+\frac{1}{2}N,\\
N_{z}&=&-2\langle \kappa,\bar\kappa\rangle Y_{z}-sY_{\bar{z}}+2D_{\bar{z}}\kappa,\\
\psi_{z}&=&D_{z}\psi-2\langle \psi,D_{\bar{z}}\kappa\rangle Y-2\langle
\psi,\kappa\rangle Y_{\bar{z}},
\end {array}\right.
\end{equation}
with the following integrable
conditions (the conformal Gauss, Codazzi and Ricci equations):
\begin{subequations}  \label{eq-integ:1}
\begin{align}
 & c_{\bar{z}}=6\langle
\kappa,D_z\bar\kappa\rangle +2\langle D_z\kappa,\bar\kappa\rangle,        \label{eq-integ:1A} \\
  &{\rm Im}\left(D_{\bar{z}}D_{\bar{z}}\kappa+\frac{\bar{c}}{2}\kappa\right)=0,    \label{eq-integ:1B} \\
 &  R^{D}_{\bar{z}z}=D_{\bar{z}}D_{z}\psi-D_{z}D_{\bar{z}}\psi =
2\langle \psi,\kappa\rangle\bar{\kappa}- 2\langle
\psi,\bar{\kappa}\rangle\kappa.  \label{eq-integ:1C}
\end{align}
\end{subequations}
 We define the Willmore
functional and Willmore surfaces as below:
\begin{definition} \emph{The Willmore functional} of $y:M\rightarrow S^n$ is
defined as the area of M with respect to the metric above:
\begin{equation}
W(y):=2i\int_{M}\langle \kappa,\bar{\kappa}\rangle dz\wedge
d\bar{z}.
\end{equation}
We call $y$ a \emph{Willmore surface}, if it is a critical surface
of the Willmore functional with respect to any variation of the map
$y:M\rightarrow S^n$.
\end{definition}
Denote by  $\vec{H}$ and $K$ the mean curvature and Gauss curvature of $y$ in $S^n$. Then it is direct to verify that $W(y)=\int_M (|\vec{H}|^2-K+1)dM$, coinciding with the original definition of Willmore functional.

Willmore surfaces
can be characterized as
\cite{Bryant1984,BPP,Ejiri1988}
\begin{theorem}\label{thm-willmore} $y$ is a Willmore surface if and only if the conformal Hopf differential $\kappa$ of $y$ satisfies the
Willmore condition
\begin{equation}\label{eq-willmore}
D_{\bar{z}}D_{\bar{z}}\kappa+\frac{\bar{c}}{2}\kappa=0.
\end{equation}
\end{theorem}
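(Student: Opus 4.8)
The plan is to prove the characterization by computing the first variation of $W(y)=2i\int_{M}\langle\kappa,\bar\kappa\rangle\,dz\wedge d\bar z$ and reading off its Euler--Lagrange equation. Since $W$ is invariant under conformal rescalings $Y\mapsto \lambda Y$ of the canonical lift and under holomorphic reparametrizations of $z$, it suffices to consider variations acting genuinely on the normal bundle $V^{\perp}_{\mathbb C}$. Concretely, I would take a compactly supported family of lifts $Y^{t}$ with $\left.\partial_{t}Y^{t}\right|_{t=0}=\xi$ for an arbitrary real section $\xi\in\Gamma(V^{\perp}_{\mathbb C})$, the tangential and $Y$-directional parts of $\delta Y$ being absorbed into the gauge freedom just mentioned.

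First I would record how the invariants vary. Reading $\kappa$ off the structure equation $Y_{zz}=-\frac{c}{2}Y+\kappa$ as the $V^{\perp}_{\mathbb C}$-component of $Y_{zz}$, one finds to first order $\delta\kappa=(\xi_{zz})^{\perp}+(\text{lower order in }\xi)=D_{z}D_{z}\xi+(\text{lower order})$, where $(\cdot)^{\perp}$ is the projection onto $V^{\perp}_{\mathbb C}$ and covariant derivatives use the normal connection $D$. Substituting into $\delta\langle\kappa,\bar\kappa\rangle=\langle\delta\kappa,\bar\kappa\rangle+\langle\kappa,\overline{\delta\kappa}\rangle$ produces an integrand linear in the second covariant derivatives of $\xi$. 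The central step is then integration by parts: using the structure equations \eqref{eq-moving}, in particular the last line expressing how an ambient $z$-derivative of a normal section splits into its $D_{z}$-part plus $Y$- and $Y_{\bar z}$-corrections, I would convert ambient derivatives into $D$-covariant ones and shift both derivatives off $\xi$ and onto $\kappa$. After discarding the total-derivative terms (which vanish by compact support), the first variation takes the form
\begin{equation}
\delta W=2i\int_{M}\Big\langle \xi,\; D_{\bar z}D_{\bar z}\kappa+\tfrac{\bar c}{2}\kappa\Big\rangle\,dz\wedge d\bar z+\text{c.c.},
\end{equation}
where c.c. denotes the complex conjugate contribution.

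Writing $Q:=D_{\bar z}D_{\bar z}\kappa+\frac{\bar c}{2}\kappa$, the displayed formula equals $4i\int_{M}\langle\xi,\operatorname{Re}Q\rangle\,dz\wedge d\bar z$ for real $\xi$, so the fundamental lemma of the calculus of variations forces $\operatorname{Re}Q=0$. Here one invokes the integrability condition \eqref{eq-integ:1B}, which asserts $\operatorname{Im}Q=0$ identically on any conformal surface; combining the two gives the full complex equation $Q=0$, that is \eqref{eq-willmore}. Conversely, $Q=0$ makes $\delta W$ vanish for every admissible $\xi$, yielding the ``if'' direction. This interplay---criticality supplies the real part while Codazzi supplies the imaginary part---is what upgrades the real variational condition to the stated complex one.

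The step I expect to be the main obstacle is controlling the variation cleanly. The canonical lift is determined only up to the scaling and reparametrization freedom, and the conformal factor $\omega$ itself changes under the deformation, so care is needed to show that the contributions of the non-normal part of $\delta Y$ either cancel or reorganize into total derivatives, leaving only the $\Gamma(V^{\perp}_{\mathbb C})$-component to survive. The other delicate point is tracking the $c$-dependent terms through the double integration by parts so that they assemble precisely into the $\frac{\bar c}{2}\kappa$ summand rather than spawning spurious curvature terms; here the conformal Gauss and Codazzi identities \eqref{eq-integ:1A}--\eqref{eq-integ:1B} are exactly what guarantee the bookkeeping closes.
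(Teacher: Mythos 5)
The paper does not actually prove this theorem: it is quoted from the literature with the citations \cite{Bryant1984}, \cite{BPP}, \cite{Ejiri1988}, so there is no in-paper argument to compare against. Your outline is the standard variational derivation from those sources, and you have correctly identified the one genuinely structural point: criticality of $W$ under normal variations only forces $\operatorname{Re}\bigl(D_{\bar z}D_{\bar z}\kappa+\tfrac{\bar c}{2}\kappa\bigr)=0$, and it is the conformal Codazzi equation \eqref{eq-integ:1B}, valid for \emph{every} conformal immersion, that supplies $\operatorname{Im}\bigl(D_{\bar z}D_{\bar z}\kappa+\tfrac{\bar c}{2}\kappa\bigr)=0$ and upgrades the Euler--Lagrange equation to the full complex condition \eqref{eq-willmore}. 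That is exactly the mechanism in \cite{BPP}.

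That said, what you have written is a plan rather than a proof, and the gap sits precisely at the two places you flag yourself. First, the displayed first-variation formula is asserted, not derived: the computation of $\delta\kappa$ is subtler than ``$(\xi_{zz})^{\perp}$ plus lower order'', because the canonical lift $Y$, the coordinate normalization $|{\rm d}Y|^2=|{\rm d}z|^2$, the bundle $V^{\perp}$ onto which one projects, and the Schwarzian $c$ all vary with $t$; one must either gauge-fix these consistently or verify that their first-order contributions cancel against the tangential part of $\delta Y$. Without carrying this out, the coefficient $\tfrac{\bar c}{2}$ in front of $\kappa$ --- which is the whole content of the theorem --- is not actually established; a priori the Euler--Lagrange operator could have contained extra terms in $\langle\kappa,\bar\kappa\rangle$ or $D_z\bar\kappa$. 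Second, the integration by parts must be done with the full structure equation $\psi_z=D_z\psi-2\langle\psi,D_{\bar z}\kappa\rangle Y-2\langle\psi,\kappa\rangle Y_{\bar z}$, and the boundary-free cancellation of the $V$-valued correction terms is a computation, not an observation. If you intend this as a self-contained proof you need to execute that computation; if you intend it as the paper does --- a statement imported from \cite{Bryant1984,BPP,Ejiri1988} --- then your sketch is a faithful summary of how those proofs go.
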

 \section{Surfaces of tensor product and the Ejiri torus}
In \cite{Li-V}, a full description of Willmore surfaces of tensor product has been provided by the moving frame methods of surfaces in $S^n$ and the discussions of solution to special elastic curve equation. In this section, we will first recall the properties of surfaces of tensor product and then characterize the Ejiri torus in \cite{Ejiri1982} as the Willmore torus with lowest Willmore functional $2\pi^2\sqrt{3}$ among all tori of tensor product.

 \subsection{Surfaces of tensor product}

To begin with, let us collect some basic descriptions of surfaces of tensor product.

Let $\gamma(s):S^1(L)\rightarrow S^n$ be a closed curve with Frenet
frame $\{\gamma,\beta_i,0\leq i\leq n-1\}$ as follows:
\begin{equation}\gamma'=\beta_0,\ \beta_0'=k_1\beta_1-\gamma,\ \beta_1'=k_2\beta_2-k_1\beta_0,\ \cdots,\
\beta_{n-1}'=-k_{n-2}\beta_{n-2},\end{equation}
where $\langle\gamma,\beta_i\rangle=0,\langle\beta_i,\beta_j\rangle=\delta_{ij}.$

Let $\hat\gamma(\hat{s}):S^1(\hat{L})\rightarrow S^m$ be a closed curve with Frenet
frame $\{\hat\gamma,\hat\beta_i,0\leq i\leq m-1\}$ as follows:
\begin{equation}\hat\gamma'=\beta_0,\ \hat\beta_0'=\hat{k}_1\hat\beta_1-\hat\gamma,\ \hat\beta_1'=\hat{k}_2\hat\beta_2-\hat{k}_1\hat\beta_0,\ \cdots,\
\hat\beta_{m-1}'=-\hat{k}_{m-2}\hat\beta_{m-2},\end{equation}
where $\langle\hat\gamma,\hat\beta_i\rangle=0,\langle\hat\beta_i,\hat\beta_j\rangle=\delta_{ij}.$

Recall that the tensor product of $x\in \mathbb{R}^n$ and $\hat{x}\in \mathbb{R}^m$ is of the form (\cite{Chen}, \cite{Li-V})
\begin{equation}x\otimes\hat{x}:=(x_1\hat{x}_1,\cdots,x_1\hat{x}_m,x_2\hat{x}_1,\cdots,x_2\hat{x}_m,\cdots,x_n\hat{x}_1,\cdots,
x_n\hat{x}_m)\in \mathbb{R}^{nm}.
\end{equation}
 It is direct to derive this lemma (\cite{Li-V})
 \begin{lemma}\cite{Li-V}
 $\langle f\otimes\hat{f},g\otimes\hat{g}\rangle=\langle f, g\rangle\langle\hat{f},
 \hat{g}\rangle,$ $\forall f,g\in \mathbb{R}^{n},\ \forall \hat{f},\ \hat{g}\in \mathbb{R}^{m}$.
 \end{lemma}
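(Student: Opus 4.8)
The plan is to verify the identity by a direct coordinate computation, exploiting the product structure of the tensor product's entries. First I would index the $nm$ coordinates of a tensor product $x\otimes\hat x\in\mathbb{R}^{nm}$ by ordered pairs $(a,b)$ with $1\le a\le n$ and $1\le b\le m$, so that, by the defining formula, the $(a,b)$-entry is exactly $x_a\hat x_b$; the lexicographic listing in the definition corresponds to placing $(a,b)$ in slot $(a-1)m+b$, but the precise slot is irrelevant as long as $f\otimes\hat f$ and $g\otimes\hat g$ are expanded with the same ordering.

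With this indexing, the Euclidean inner product on $\mathbb{R}^{nm}$ is the sum over all coordinates of the products of matching entries, so
\begin{equation}
\langle f\otimes\hat f,\ g\otimes\hat g\rangle=\sum_{a=1}^{n}\sum_{b=1}^{m}(f_a\hat f_b)(g_a\hat g_b).
\end{equation}
The key observation is that the summand splits as a product of a factor depending only on $a$ and a factor depending only on $b$, namely $(f_a g_a)(\hat f_b\hat g_b)$. Hence the double sum factors as $\bigl(\sum_{a}f_a g_a\bigr)\bigl(\sum_{b}\hat f_b\hat g_b\bigr)$, and recognizing the two factors as $\langle f,g\rangle$ and $\langle\hat f,\hat g\rangle$ gives the claim.

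There is no genuine obstacle here: the statement is a bilinear algebraic identity, and the only point requiring care is the bookkeeping of indices, i.e. ensuring that the two tensor products are expanded with a common ordering of the $nm$ coordinates so that matching entries are paired. By bilinearity in each of the four arguments it would alternatively suffice to check the identity on the standard basis vectors $f=e_a$, $g=e_{a'}$, $\hat f=\hat e_b$, $\hat g=\hat e_{b'}$, in which case both sides reduce to $\delta_{aa'}\delta_{bb'}$; this provides an index-light route to the same conclusion and is the version I would use if a cleaner write-up were desired.
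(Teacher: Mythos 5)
Your computation is correct: the paper offers no proof of this lemma (it is stated as ``direct to derive'' with a citation to Li--Vrancken), and the intended argument is exactly the coordinate expansion you give, where the double sum $\sum_{a,b}(f_a g_a)(\hat f_b\hat g_b)$ factors into $\langle f,g\rangle\langle\hat f,\hat g\rangle$. Your alternative bilinearity-plus-basis-vectors remark is a clean equivalent formulation; either write-up suffices.
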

So we obtain a torus via the tensor product
 \[y:=\gamma(s)\otimes\hat\gamma(\hat s):T^2\rightarrow S^{(n+1)(m+1)-1}.\]
 We also have that
 \[Y=\left(1,\gamma(s)\otimes\hat\gamma(\hat {s})\right)\]
 is a canonical lift of $y$ into $\mathbb{R}^{(n+1)(m+1)+1}_{1}$ with respect to the complex coordinate $z=s+i\hat{s}$. So
\begin{equation}\label{eq-yzz}
\left\{\begin{array}{llll}
Y_z&=&\frac{1}{2}(0,\beta_0\otimes\hat\gamma)-\frac{i}{2}(0,\gamma\otimes\hat\beta_0),\\
Y_{z\bar{z}}&=&\frac{1}{4}k_1(0,\beta_1\otimes\hat\gamma)
+\frac{1}{4}\hat{k}_1(0,\gamma\otimes\hat\beta_1)
-\frac{1}{2}(0,\gamma\otimes\hat\gamma),\\
Y_{zz}&=& \frac{1}{4}k_1(0,\beta_1\otimes\hat\gamma)
-\frac{1}{4}\hat{k}_1(0,\gamma\otimes\hat\beta_1)
-\frac{i}{2}(0,\beta_0\otimes\hat\beta_0).
\end{array}\right.
\end{equation}
And from the structure equations we have that
\begin{equation}\label{eq-c-wp}
c=4\langle Y_{zz},Y_{z\bar{z}}\rangle =\frac{1}{4}(k_1^2-\hat k_1^2),\end{equation}
and
\begin{equation}\label{eq-kappa-wp}\langle\kappa,\bar\kappa\rangle=\langle Y_{zz},Y_{\bar{z}\bar{z}}\rangle=\langle Y_{z\bar{z}},Y_{z\bar{z}}\rangle=\frac{1}{4}+\frac{1}{16}(k_1^2+\hat{k}_1^{2}). \end{equation}

We need the following lemma, which can be found in the discussions above Theorem 3.1 on page 146 of \cite{Langer-Singer-agag}(See also \cite{Langer-Singer1984, Langer-Singer-BLMS} for more discussions). For the reader's convenience, we give a proof as below.
\begin{lemma} \label{lemma} \cite{Langer-Singer-agag}
 Let $\gamma(s):[0,L]\rightarrow S^n$ be a $C^2$ closed curve with length $L$ and curvature $k$ in $S^n $. Let $a_0\geq 2$ be a constant. Then
\begin{equation}
\mathcal{E}(\gamma):=\int_0^L(k^2+a_0)ds\geq 4\pi\sqrt{a_0-1}.
\end{equation}
And the equality holds if and only if $\gamma$ is a circle in $S^2 $ with curvature $\sqrt{a_0-2}$.
\end{lemma}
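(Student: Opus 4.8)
The plan is to transfer the problem from $S^n$ to the surrounding Euclidean space $\mathbb{R}^{n+1}$, where the total curvature is controlled by Fenchel's theorem, and then to absorb the constant weight by an elementary pointwise estimate. First I would record the relation between the intrinsic curvature $k$ of $\gamma$ as a curve in $S^n$ and its curvature $\tilde k$ as a curve in $\mathbb{R}^{n+1}$. Parametrizing $\gamma$ by arclength and differentiating twice in $\mathbb{R}^{n+1}$, the second fundamental form of the unit sphere $S^n\subset\mathbb{R}^{n+1}$ contributes a normal term $-\gamma$ which is orthogonal to the tangential acceleration $\nabla_{\gamma'}\gamma'$, so that
\[
\tilde k^2=|\gamma''|^2=|\nabla_{\gamma'}\gamma'|^2+|\gamma|^2=k^2+1 .
\]
Hence $\mathcal{E}(\gamma)=\int_0^L\bigl(\tilde k^2+(a_0-1)\bigr)\,ds$.

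Next, since $a_0-1\geq 1>0$, the elementary inequality $\tilde k^2+(a_0-1)\geq 2\sqrt{a_0-1}\,\tilde k$ gives
\[
\mathcal{E}(\gamma)\geq 2\sqrt{a_0-1}\int_0^L \tilde k\,ds .
\]
The integral on the right is exactly the total curvature of the closed $C^2$ regular curve $\gamma$ in $\mathbb{R}^{n+1}$, so Fenchel's theorem yields $\int_0^L\tilde k\,ds\geq 2\pi$. Combining the two estimates produces $\mathcal{E}(\gamma)\geq 4\pi\sqrt{a_0-1}$, which is the claimed bound.

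Finally I would analyze the equality case by tracking when each of the two inequalities is sharp. Equality in the AM-GM step forces $\tilde k\equiv\sqrt{a_0-1}$, i.e.\ $\gamma$ has constant Euclidean curvature, while equality in Fenchel's theorem forces $\gamma$ to be a convex plane curve; together these identify $\gamma$ as a round circle of radius $1/\sqrt{a_0-1}$ lying in some affine $2$-plane of $\mathbb{R}^{n+1}$. Since this circle lies on $S^n$, the $3$-dimensional linear span of its plane meets $S^n$ in a totally geodesic $S^2$ containing $\gamma$, and because $S^2$ is totally geodesic its geodesic curvature there equals its geodesic curvature in $S^n$, namely $k=\sqrt{\tilde k^2-1}=\sqrt{a_0-2}$. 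Conversely, any such circle clearly attains equality.

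The step I expect to demand the most care is the equality discussion: one must verify that the two separate equality conditions are simultaneously achievable and that they pin down precisely a circle in a totally geodesic $S^2$ with the stated curvature $\sqrt{a_0-2}$, and in particular one should check that the hypotheses ($C^2$, regular, closed) legitimately allow the invocation of Fenchel's theorem. The remaining computations, namely the curvature relation $\tilde k^2=k^2+1$ and the scalar inequality, are routine.
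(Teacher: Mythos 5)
Your proposal is correct and follows essentially the same route as the paper's proof: write $k^2+a_0=(k^2+1)+(a_0-1)$, apply the AM--GM inequality to reduce to the total Euclidean curvature $\int\sqrt{k^2+1}\,ds$, invoke Fenchel's theorem, and characterize equality via the constancy of $\tilde k$ together with the convex-plane-curve equality case of Fenchel. Your treatment of the equality case is somewhat more detailed than the paper's (which simply notes that a planar closed curve on $S^n$ is a circle of constant curvature with $k^2+1=a_0-1$), but the argument is the same.
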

\begin{proof} It is direct to see
\[\mathcal{E}(\gamma)=\int_0^L(k^2+a_0)ds\geq 2\int_0^L\sqrt{k^2+1}\sqrt{a_0-1}ds.\]
Since $\sqrt{k^2+1}$ is the curvature of $\gamma$ in $\mathbb{R}^{n+1}$, by Fenchel Theorem, \[\int_0^L\sqrt{k^2+1}\sqrt{a_0-1}ds\geq2\pi \sqrt{a_0-1},\]
 and equality holds if and only if $\gamma$ is a convex closed curve contained in a 2-dimensional plane of $\mathbb{R}^{n+1}$.
For the equality case, $\gamma$ is the intersection of a 2-dimensional plane and $S^n(1)$, hence $k=constant$. Moreover, we also have
$k^2+1=a_0-1$, i.e., $k=\sqrt{a_0-2}$.
\end{proof}

\subsection{The Ejiri torus}
\begin{theorem}\label{th-main} Let $y=\gamma\otimes\hat\gamma$ be a Willmore torus of tensor product. Then
\[W(y)\geq2\pi^2\sqrt{3},\]
with the equality holding if and only if $y$ is the torus of Ejiri in \cite{Ejiri1982}:
\begin{equation}\label{eq-ejiri-t}
\sqrt{\frac{1}{3}}\left(\cos \hat s\cos \sqrt{3}s,\sin \hat s\cos \sqrt{3}s,\cos \hat s\sin \sqrt{3}s,\sin\hat s\sin \sqrt{3}s,\sqrt{2}\cos\hat s,\sqrt{2}\sin \hat s\right).\end{equation}
\end{theorem}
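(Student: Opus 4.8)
The plan is to reduce the Willmore functional to a single curvature integral and then apply Lemma~\ref{lemma}. First I would compute $W(y)$ from the conformal Hopf differential: since $z=s+i\hat s$ gives $2i\,dz\wedge d\bar z=4\,ds\wedge d\hat s$, formula \eqref{eq-kappa-wp} yields
\[
W(y)=4\int_{T^2}\langle\kappa,\bar\kappa\rangle\,ds\,d\hat s=\int_{T^2}\Big(1+\tfrac14 k_1^2+\tfrac14\hat k_1^2\Big)\,ds\,d\hat s .
\]
Because the integrand is a sum of a function of $s$ and a function of $\hat s$, this separates and, after absorbing the constant into the first factor, becomes
\[
W(y)=\frac{\hat L}{4}\int_0^L(k_1^2+4)\,ds+\frac{L}{4}\int_0^{\hat L}\hat k_1^2\,d\hat s ,
\]
together with the symmetric expression obtained by interchanging the two factors. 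These identities use only the tensor-product structure, not the Willmore condition.

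The decisive step — and the one I expect to be the main obstacle — is to show that the Willmore condition \eqref{eq-willmore} forces, after possibly interchanging $\gamma$ and $\hat\gamma$, one factor to be a great circle, i.e. $\hat k_1\equiv 0$ and hence $\hat L=2\pi$, with $\gamma$ a closed (elastic) curve in $S^n$. This is exactly the reduction of Li and Vrancken, and it is indispensable: without it the inequality is false, since for instance the tensor product of two equal small circles of curvature $t$ has $W=2\pi^2(2+t^2)/(1+t^2)$, which drops below $2\pi^2\sqrt3$ (indeed down to $2\pi^2$) as $t$ grows. Concretely, I would extract $\hat k_1\equiv 0$ by writing $\kappa=Y_{zz}+\tfrac{c}{2}Y$ from \eqref{eq-yzz} and \eqref{eq-c-wp}, observing that $\kappa$ again splits into an $s$-part and an $\hat s$-part, and substituting into $D_{\bar z}D_{\bar z}\kappa+\tfrac{\bar c}{2}\kappa=0$; matching the separated components (in particular the term along $\beta_0\otimes\hat\beta_0$) is what pins down the great-circle factor and leaves the elastica equation for the other. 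This computational matching is the technical heart of the argument.

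Granting this reduction, the remainder is immediate. With $\hat k_1\equiv 0$ and $\hat L=2\pi$ the second integral drops out and
\[
W(y)=\frac{\pi}{2}\int_0^L(k_1^2+4)\,ds .
\]
Applying Lemma~\ref{lemma} to the closed curve $\gamma$ with $a_0=4\ge 2$ gives $\int_0^L(k_1^2+4)\,ds\ge 4\pi\sqrt3$, hence $W(y)\ge 2\pi^2\sqrt3$. For the equality case, Lemma~\ref{lemma} forces $\gamma$ to be a circle in $S^2$ of constant curvature $\sqrt{a_0-2}=\sqrt2$, which may be written as $\gamma=\tfrac{1}{\sqrt3}(\cos\sqrt3 s,\sin\sqrt3 s,\sqrt2)$ of length $2\pi/\sqrt3$. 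Finally I would check by direct substitution that $\gamma\otimes\hat\gamma$ with $\hat\gamma=(\cos\hat s,\sin\hat s)$ is, up to reordering coordinates, precisely \eqref{eq-ejiri-t}, and recompute $W=2\pi^2\sqrt3$ from the displayed formula as a consistency check. The bound and the rigidity statement thus follow purely from Fenchel's theorem as packaged in Lemma~\ref{lemma}; the only genuinely hard point is the structural reduction of the middle paragraph.
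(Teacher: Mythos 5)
Your proposal is correct and follows essentially the same route as the paper: the paper simply invokes Theorem 1 of \cite{Li-V} for the great-circle reduction you identify as the technical heart (the direct computation you sketch is in fact carried out in Section 4 for the more general constrained Willmore case, where the decisive normal component is $\beta_1\otimes\hat\beta_1$ rather than $\beta_0\otimes\hat\beta_0$, since $\kappa$ itself has a component along $\beta_0\otimes\hat\beta_0$), and then reduces $W(y)$ to $\frac{\pi}{2}\int_\gamma(k_1^2+4)\,ds$ and applies Lemma \ref{lemma} with $a_0=4$ exactly as you do. The equality analysis and the identification of the resulting circle of curvature $\sqrt{2}$ with the Ejiri torus \eqref{eq-ejiri-t} also match the paper's proof.
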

\begin{proof}
Since $y$ is a Willmore torus of tensor product, by Theorem 1 of \cite{Li-V}, one of $\gamma$ and $\hat{\gamma}$ is a big circle in a sphere. Without loss of generality, we assume that $\hat{k}_1=0$. By \eqref{eq-kappa-wp}, we have that
\[W(y)=4\int_M\langle\kappa,\bar\kappa\rangle ds d\hat{s}=4\int_0^{2\pi}d\hat{s}\int_{\gamma}\left(\frac{1}{4}+\frac{1}{16}k_1^2\right)ds= \frac{\pi}{2}\int_{\gamma}(k_1^2+4)ds.\]
By Lemma \ref{lemma}, or the discussions above Theorem 3.1 in page 146 of \cite{Langer-Singer-agag}, \[\int_{\gamma}(k_1^2+4)dt\geq 4\pi\sqrt{3}\] with equality holding if and only if $k_1=\sqrt{2}, $ $k_2\equiv0$. Elementary computation shows that this gives the Ejiri torus \eqref{eq-ejiri-t}.
\end{proof}
From the proof we obtain that
\begin{corollary} Let $y=\gamma\otimes\hat\gamma$ be a torus of tensor product with $\hat\gamma(\hat{s})=(\cos\hat s, \sin\hat s)$. Then
\[W(y)\geq2\pi^2\sqrt{3},\]
with the equality holding if and only if $y$ is the Ejiri's torus \eqref{eq-ejiri-t}.
\end{corollary}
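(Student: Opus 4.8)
The plan is to recognize that the Corollary is exactly the computation inside the proof of Theorem \ref{th-main}, but with the Willmore hypothesis dropped in favor of an explicit choice of the second factor. The curve $\hat\gamma(\hat s)=(\cos\hat s,\sin\hat s)$ is the unit great circle in $S^1\subset\mathbb{R}^2$, hence a geodesic, so its first Frenet curvature vanishes identically: $\hat k_1\equiv 0$. The point worth emphasizing is that I therefore never invoke the Li--Vrancken classification (Theorem 1 of \cite{Li-V}): the condition $\hat k_1=0$, which that classification \emph{produces} for Willmore tensor products, is here simply built into the hypothesis. Thus the Corollary is genuinely more general than the Theorem in one respect (it does not assume $y$ Willmore) while being less general in another (it fixes $\hat\gamma$ to be the standard circle).

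First I would substitute $\hat k_1=0$ into the formula \eqref{eq-kappa-wp}, obtaining $\langle\kappa,\bar\kappa\rangle=\frac14+\frac{1}{16}k_1^2$, which depends on the point of $T^2$ only through the curvature $k_1$ of $\gamma$ and is independent of $\hat s$. Then I would compute $W(y)=4\int_M\langle\kappa,\bar\kappa\rangle\,ds\,d\hat s$; since the integrand is $\hat s$-independent and $\hat s$ runs over $[0,2\pi]$, the $\hat s$-integration contributes a factor $2\pi$ and the functional collapses to the one-dimensional integral
\[W(y)=\frac{\pi}{2}\int_{\gamma}(k_1^2+4)\,ds,\]
exactly as in the proof of Theorem \ref{th-main}. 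The estimate is then immediate from Lemma \ref{lemma} applied with $a_0=4$, so that $a_0-1=3$ and $\sqrt{a_0-2}=\sqrt2$: one gets $\int_{\gamma}(k_1^2+4)\,ds\geq 4\pi\sqrt3$, whence $W(y)\geq 2\pi^2\sqrt3$, with equality precisely when $k_1\equiv\sqrt2$ and all higher curvatures vanish, i.e. when $\gamma$ is a circle of curvature $\sqrt2$ lying in a totally geodesic $S^2\subset S^n$.

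I do not expect any real obstacle; the argument is a transcription of the proof of Theorem \ref{th-main} with the Li--Vrancken input replaced by the direct observation $\hat k_1=0$. The only remaining step, which is routine, is the identification of the equality case: integrating the Frenet system for $\gamma$ with $k_1\equiv\sqrt2$ and $k_2\equiv 0$ produces an explicit circle in $S^2$ of the correct radius, and tensoring it with $\hat\gamma(\hat s)=(\cos\hat s,\sin\hat s)$ reproduces the parametrization \eqref{eq-ejiri-t} of the Ejiri torus. This is precisely the elementary computation alluded to at the end of the Theorem's proof, and it completes the \emph{if and only if} statement. The net content of the Corollary is then that the lower bound $2\pi^2\sqrt3$ survives even without assuming $y$ Willmore, as soon as the second factor is the standard circle.
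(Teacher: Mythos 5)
Your proposal is correct and coincides with the paper's own argument: the paper derives this corollary directly ``from the proof'' of Theorem \ref{th-main}, and your observation that the hypothesis $\hat\gamma=(\cos\hat s,\sin\hat s)$ supplies $\hat k_1\equiv 0$ directly (so the Li--Vrancken classification and the Willmore assumption are not needed) is exactly the intended point. The reduction to $W(y)=\frac{\pi}{2}\int_\gamma(k_1^2+4)\,ds$ and the application of Lemma \ref{lemma} with $a_0=4$ match the paper's computation verbatim.
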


\begin{remark}Note that when $k_1\equiv0$, the image of $y$ is exactly a double cover of the classical Clifford torus, and hence its Willmore functional $W(y)=4\pi^2$. Below we see a family tori of tensor product, with the double cover of the classical Clifford torus as the one with highest Willmore functional and the limiting surface provides the classical Clifford torus. We also remark that for any torus $y$ of tensor product, $W(y)>2\pi^2$ (Proposition \ref{prop-wy}).
\end{remark}

\begin{example}\label{example-inf} Let $0<a\leq1$ and $a^2+b^2=1, \ b\geq0$. Set
\[y_a=\gamma_a(s)\otimes \hat\gamma_a(\hat s), \hbox{ with } \gamma_a(s)=\left(a\cos  \frac{s}{a}, a \sin  \frac{s}{a} , b\right),\ \hat\gamma_a(\hat s)=\left(a\cos  \frac{\hat s}{a}, a \sin  \frac{\hat s}{a} , b\right).\  \]
We see that for all $a\in(0,1]$, $y_a$ is a flat torus in some $S^7(a)\subset S^8$. Moreover, when $a=1$, $y_a$ is a double cover of the classical Clifford torus in some $S^3(1)\subset S^8$, and when $a\rightarrow 0$, by some scaling, $y_a$ tends to the clifford torus $(\cos s, \sin s, \cos\hat s, \sin \hat s)$.
Concerning the Willmore functional of $y_a$, by \eqref{eq-kappa-wp}, we have that
\begin{equation*}
\begin{split}W(y_a)=4\int_{\hat\gamma_a}\int_{\gamma_a}\left(\frac{1}{4}+\frac{1}{16}k_1^2+\frac{1}{16}\hat k_1^2\right)dsd\hat s = 2\pi^2(1+a^2)>2\pi^2.
\end{split}\end{equation*}

\end{example}

\begin{example}
From Theorem \ref{th-main}, it is natural to conjecture that Ejiri's torus is a stable Willmore torus in $S^n$. However, the following examples show that this is not true when $n\geq7$.

 Let $0<a\leq1$ and $a^2+b^2=1, \ b\geq0$. Set
\[\tilde{y}_a=\gamma (s)\otimes \hat\gamma_a(\hat s)\] with \[ \gamma (s)=\left(\sqrt{\frac{ 1 }{ 3}}\cos   \sqrt{3}s, \sqrt{\frac{ 1 }{ 3}}\sin   \sqrt{3}s, \sqrt{\frac{ 2 }{ 3}} \right),\ \hat\gamma_a(\hat s)=\left(a\cos  \frac{\hat s}{a}, a \sin  \frac{\hat s}{a} , b\right).\  \]
We see that for all $a\in(0,1]$, $\tilde{y}_a$ is a flat torus in some $S^7(r)\subset S^8$, $r=\sqrt{1-\frac{2b^2}{3}}$. Moreover, when $a=1$, $\tilde{y}_a$ is the Ejiri's torus.
Concerning the Willmore functional of $\tilde{y}_a$, by \eqref{eq-kappa-wp}, we have that
\begin{equation*}
\begin{split}W(\tilde{y}_a)&=4\int_{\hat\gamma_a}d\hat s\int_{\gamma}\left(\frac{1}{4}+\frac{1}{16}k_1^2+\frac{1}{16}\hat k_1^2\right)ds= \frac{\pi^2\left(5a^2+1\right)}{a\sqrt{3}}.
\end{split}\end{equation*}
So $W(\tilde{y}_a)$ is an increasing function in $a$ for $a\in\left[\frac{1}{\sqrt{5}},1\right]$, which means that Ejiri's torus is unstable in $S^7$.
\end{example}

\begin{proposition}
Let $y=\gamma\otimes\hat\gamma:S^1(1)\times S^1(\sqrt{\frac{1}{3}})\rightarrow S^5\subset S^7$ be the Ejiri's torus in \eqref{eq-ejiri-t}. Then $y$ is unstable in $S^7$.
\end{proposition}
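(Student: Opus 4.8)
The plan is to exploit the one-parameter family $\tilde y_a$ constructed in the Example immediately above, which passes through the Ejiri torus \eqref{eq-ejiri-t} at $a=1$ and which lies, after the harmless rescaling permitted by the conformal invariance of $W$, in a fixed $S^7$. Before measuring a second variation I would first record that $y$ is genuinely a Willmore surface when regarded in $S^7$, so that speaking of (in)stability is legitimate. This holds because $S^5\subset S^7$ is totally geodesic: the two extra normal directions form a parallel subbundle of $V^{\perp}$, so the conformal Hopf differential $\kappa$ and the Schwarzian $c$ of $y$ in $S^7$ agree with those computed in $S^5$, and $D_{\bar z}D_{\bar z}\kappa$ acquires no component along the new directions. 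Hence the Willmore condition \eqref{eq-willmore} continues to hold verbatim in $S^7$ and $y$ is Willmore there by Theorem \ref{thm-willmore}.

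The second step is to turn $\tilde y_a$ into an honest smooth variation through $y$. In the parameter $a$ the family fails to be differentiable at $a=1$, since the third entry $b=\sqrt{1-a^2}$ of $\hat\gamma_a$ has infinite $a$-derivative there. I would therefore switch to $b$ as the variation parameter, writing $\hat\gamma(\hat s)=(\sqrt{1-b^2}\cos t,\sqrt{1-b^2}\sin t,b)$ with $\hat s=\sqrt{1-b^2}\,t$ and keeping $(s,t)$ as the fixed domain. Then $b\mapsto\tilde y_b$ is $C^{\infty}$, $\tilde y_0=y$, and the variation field $V=\partial_b\tilde y_b\big|_{b=0}=\gamma(s)\otimes(0,0,1)$ is a nonzero section taking values entirely in the directions orthogonal to the original $S^5$; in particular its normal component does not vanish.

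The third step is the computation of $W$ along the family. Substituting $a^2=1-b^2$ into the expression $W(\tilde y_a)=\pi^2(5a^2+1)/(a\sqrt3)$ obtained in the Example gives
\[
W(\tilde y_b)=\frac{\pi^2(6-5b^2)}{\sqrt3\,\sqrt{1-b^2}}=2\pi^2\sqrt3-\frac{2\pi^2}{\sqrt3}\,b^2+O(b^4).
\]
The vanishing of the linear term is the expected reflection of the criticality of $y$ established in the first step, while the negative quadratic coefficient is decisive. Since $y$ is Willmore, the first variation vanishes and the acceleration term drops out, so that $\frac{d^2}{db^2}W(\tilde y_b)\big|_{b=0}=\delta^2W(V,V)=-\tfrac{4\pi^2}{\sqrt3}<0$ is precisely the second variation of $W$ in the normal direction determined by $V$. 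Therefore $y$ is an unstable Willmore torus in $S^7$.

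I expect the only genuine subtlety to be the bookkeeping around the reparametrization: one must confirm that passing from $a$ to $b$ yields a smooth variation with nontrivial normal part, and that $\frac{d^2}{db^2}W\big|_{b=0}$ really computes $\delta^2W(V,V)$ rather than a reparametrization artifact. This is exactly the point at which the criticality of $y$ in $S^7$ is needed, so that the contribution of $\partial_b^2\tilde y_b$ enters only through the vanishing first variation. Everything else reduces to the elementary Taylor expansion displayed above.
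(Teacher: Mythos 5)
Your proof is correct and follows essentially the same route as the paper: the paper's justification of this proposition is precisely the preceding Example, namely the computation $W(\tilde y_a)=\pi^2(5a^2+1)/(a\sqrt{3})$ together with the observation that this is increasing in $a$ near $a=1$. Your reparametrization by $b=\sqrt{1-a^2}$, the identification of the normal variation field $\gamma\otimes(0,0,1)$, and the explicit expansion $W=2\pi^2\sqrt{3}-\tfrac{2\pi^2}{\sqrt{3}}b^2+O(b^4)$ supply exactly the smoothness and first-variation bookkeeping that the paper leaves implicit (the $a$-parametrization is indeed non-differentiable at $a=1$, which is why the paper's formula has a nonzero $a$-derivative there despite $y$ being critical), so your write-up is a more careful rendering of the same argument rather than a different one.
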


Later in Section 5 we will see that the Ejiri torus is unstable in $S^5$.
\subsection{Unstability of torus of tensor product}

The one parameter family of tori deriving the unstability of the Ejiri torus can be deformed easily to show that all   tori of tensor product are unstable.
\begin{example}\label{example-unstable} Let $y_1=\gamma (s)\otimes \hat\gamma (\hat s)$ be a torus of tensor product as before.  Let $0<a\leq1$ and $a^2+b^2=1, \ b\geq0$. Set
\[y_a=\gamma_a(\theta)\otimes \hat\gamma_a(\hat \theta),\ \hbox{ with }\ \gamma_a(\theta)=\left(a\gamma\left(\frac{\theta}{a}\right), b\right), \ \hat\gamma_a(\hat \theta)=\left(a\gamma\left(\frac{\hat \theta}{a}\right), b\right).\]
Then it is direct to derive that the curvature $k_{1,a}(\theta)$ of $\gamma_a(\theta)$ is $\frac{1}{a}k_1(\frac{\theta}{a})$ and the curvature $\hat k_{1,a}(\theta)$ of $\hat\gamma_a(\hat \theta)$ is $\frac{1}{a}\hat k_1(\frac{\hat \theta}{a})$.
By \eqref{eq-kappa-wp} and Lemma \ref{lemma}, we have that
\begin{equation*}
\begin{split}
W(y_a)&=\frac{1}{4}\iint_{\gamma_a\otimes\hat\gamma_a}\left(4+\frac{1}{a^2}k_1^2\left(\frac{\theta}{a}\right)+ \frac{1}{a^2}\hat k_1^2\left(\frac{\hat \theta}{a}\right)\right)d\theta d\hat{\theta}\\
&=\frac{1}{4}\iint_{\gamma\otimes\hat\gamma}\left(4a^2+ k_1^2(s)+  \hat k_1^2(\hat s)\right)dsd\hat{s}\\
&\leq \frac{1}{4}\iint_{\gamma\otimes\hat\gamma}\left(4+ k_1^2(s)+ \hat k_1^2(\hat s)\right)ds d\hat{s}=W(y_1).\\
\end{split}\end{equation*}
\end{example}
So ignoring the co-dimensional restriction, we obtain the following
\begin{theorem}\label{th-main-3} Let $y=\gamma\otimes\hat\gamma$ be a (Willmore) torus of tensor product. Then it is unstable.
\end{theorem}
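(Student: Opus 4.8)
The plan is to exhibit, for an arbitrary torus of tensor product $y_1=\gamma\otimes\hat\gamma$, an explicit one-parameter family $\{y_a\}_{a\in(0,1]}$ that passes through $y_1$ at $a=1$ and whose Willmore energy strictly decreases as $a$ drops below $1$; this immediately witnesses instability. The family is precisely the one displayed in Example \ref{example-unstable}: replace $\gamma$ by $\gamma_a(\theta)=\bigl(a\,\gamma(\theta/a),\,b\bigr)$ with $a^2+b^2=1$, $b\ge0$, and likewise for $\hat\gamma$. First I would verify that $\gamma_a$ is again a unit-speed closed curve in a sphere (one dimension higher), check the period so that $y_a$ remains a torus, and confirm that at $a=1$ one recovers $\gamma_1=\gamma$ and hence $y_1$. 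The geometric point is that shrinking the ``radius'' $a$ of the curve while pushing it off the equator by the constant height $b$ rescales the geodesic curvature.

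The computational heart is the curvature transformation. I would show that the first curvature $k_{1,a}$ of $\gamma_a$ equals $\tfrac1a\,k_1(\theta/a)$, and similarly $\hat k_{1,a}=\tfrac1a\,\hat k_1(\hat\theta/a)$; this is the standard effect of the homothety $a$ on arclength together with the flat normal push. Substituting into the energy formula \eqref{eq-kappa-wp}, the Willmore functional of $y_a$ becomes
\begin{equation*}
W(y_a)=\frac14\iint_{\gamma_a\otimes\hat\gamma_a}\Bigl(4+\tfrac1{a^2}k_1^2(\tfrac{\theta}{a})+\tfrac1{a^2}\hat k_1^2(\tfrac{\hat\theta}{a})\Bigr)\,d\theta\,d\hat\theta.
\end{equation*}
Next I would change variables $\theta=as$, $\hat\theta=a\hat s$; the Jacobian $a^2$ cancels the two factors of $1/a^2$ sitting on the curvature terms but leaves the constant $4$ multiplied by $a^2$, yielding $\tfrac14\iint_{\gamma\otimes\hat\gamma}\bigl(4a^2+k_1^2(s)+\hat k_1^2(\hat s)\bigr)\,ds\,d\hat s$. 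Since $a\le1$, replacing $4a^2$ by $4$ only increases the integrand, so $W(y_a)\le W(y_1)$, with strict inequality whenever $a<1$ (the $k_1^2,\hat k_1^2$ pieces are $a$-independent after the substitution). This is exactly the chain of (in)equalities already written in Example \ref{example-unstable}.

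To conclude instability I would note that a Willmore torus is, by definition, a critical point of $W$; here I have produced an admissible variation through $y_1$ along which $W$ strictly decreases on one side, so $y_1$ cannot be a local minimum and is therefore unstable. The main obstacle I anticipate is not the energy estimate, which is elementary, but the bookkeeping needed to make the variation legitimate: one must ensure that $a\mapsto y_a$ is a genuine smooth variation of the immersion (not merely of the abstract curvature data), that each $y_a$ closes up into an embedded torus so that the comparison of total energies is meaningful, and that the family is nontrivial, i.e.\ that $\tfrac{d}{da}y_a\neq0$ at $a=1$ so it is not a mere reparametrization. Once these regularity points are dispatched, the strict monotonicity $W(y_a)<W(y_1)$ for $a<1$ delivers the theorem. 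The parenthetical ``(Willmore)'' in the statement signals that the argument needs no Willmore hypothesis at all: every torus of tensor product admits this energy-decreasing deformation, so the same estimate shows it is not even a local minimizer regardless of criticality.
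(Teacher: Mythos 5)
Your proposal is correct and is essentially identical to the paper's own argument: the paper proves Theorem \ref{th-main-3} precisely via the family of Example \ref{example-unstable}, with the curvature rescaling $k_{1,a}(\theta)=\frac{1}{a}k_1(\theta/a)$, the change of variables, and the resulting inequality $W(y_a)\le W(y_1)$ with equality only at $a=1$. The regularity and closure points you flag are real but routine, and the paper treats them at the same level of detail as you do.
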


\begin{remark} For Willmore tori of tensor product, we also have a simpler family to show the un-stability of them. Let $0<a\leq1$ and $a^2+b^2=1, \ b\geq0$. Let $y_1=\gamma (s)\otimes \hat\gamma (\hat s)$ be a Willmore torus of tensor product. So we have $\hat\gamma=(\cos \hat s,\sin \hat s)$.  Let $0<a\leq1$ and $a^2+b^2=1, \ b\geq0$. Set
\[y_a=\gamma_a(\theta)\otimes \hat\gamma(\hat \theta),\ \hbox{ with }\ \gamma_a(\theta)=\left(a\gamma\left(\frac{\theta}{a}\right), b\right).\]
So we have that
\begin{equation*}
\begin{split}
W(y_a)=\frac{1}{4}\iint_{\gamma_a\otimes\hat\gamma}\left(4+\frac{1}{a^2}k_1^2\left(\frac{\theta}{a}\right)\right)d\theta d\hat{s}=\frac{1}{4}\iint_{\gamma\otimes\hat\gamma}\left(4a^2+ k_1^2(s)\right)dsd\hat{s}.\\
\end{split}\end{equation*}
So we have $W(y_a)\leq W(y)$ for all $a\in(0,1]$ and $W(y_a)\leq W(y)$ if and only if $a=1$. This indicates that all Willmore tori of tensor product are unstable in $S^9$ since by Theorem 1 of \cite{Li-V}, such surfaces must be in some $S^7$.
\end{remark}

\vspace{2mm}

We end this section by a bit more discussion on the values of Willmore functional for tori of tensor product. From Example \ref{example-inf}, we see that there exists a family tori of tensor product with their Willmore functional tending to $2\pi^2$. The following proposition shows that $2\pi^2$ is exactly the infimum of the Willmore functional for tori of tensor product. Note that it is also a corollary of a theorem of Chen \cite{Chen2}, which states that for a closed flat surface $x$ in $\R^n$, its Willmore functional $W(x)\geq2\pi^2$, with equality holding when $x$ is the standard Clifford torus (See also \cite{Chen-book}, \cite{Hou} for a proof).
\begin{proposition} \label{prop-wy}
Let $y=\gamma\otimes\hat\gamma$ be a torus of tensor product. Then $W(y)> 2\pi^2$.
\end{proposition}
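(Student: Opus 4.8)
The plan is to reduce the statement to two independent applications of Fenchel's theorem, one for each factor curve, glued together by an elementary inequality, thereby giving a short self-contained argument rather than invoking Chen's general flat-surface bound. Starting from \eqref{eq-kappa-wp}, I would split the integrand symmetrically,
\[\langle\kappa,\bar\kappa\rangle=\frac{1}{16}\left(k_1^2+2\right)+\frac{1}{16}\left(\hat k_1^2+2\right).\]
Since $z=s+i\hat s$ with $s,\hat s$ arc-length parameters on the flat torus and $W(y)=4\int_M\langle\kappa,\bar\kappa\rangle\,ds\,d\hat s$, the energy factors as
\[W(y)=\frac14\left(\hat L\,P+L\,\hat P\right),\qquad P:=\int_0^L\left(k_1^2+2\right)ds,\quad \hat P:=\int_0^{\hat L}\left(\hat k_1^2+2\right)d\hat s,\]
where $L,\hat L$ denote the lengths of $\gamma,\hat\gamma$.

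Next I would estimate the two mixed products $LP$ and $\hat L\hat P$ separately. By Cauchy--Schwarz, $L\,P\geq\left(\int_0^L\sqrt{k_1^2+2}\,ds\right)^2$; since $\sqrt{k_1^2+2}>\sqrt{k_1^2+1}$ pointwise and $\sqrt{k_1^2+1}$ is precisely the curvature of $\gamma$ regarded as a curve in $\mathbb{R}^{n+1}$ (as used in the proof of Lemma \ref{lemma}), Fenchel's theorem gives $\int_0^L\sqrt{k_1^2+1}\,ds\geq 2\pi$, and hence the \emph{strict} bound $LP>4\pi^2$; symmetrically $\hat L\hat P>4\pi^2$. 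A final application of AM--GM then yields
\[\hat L\,P+L\,\hat P\geq 2\sqrt{(LP)(\hat L\hat P)}>2\sqrt{16\pi^4}=8\pi^2,\]
so that $W(y)=\tfrac14(\hat L P+L\hat P)>2\pi^2$, as required.

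The only genuine obstacle is that one cannot bound the two energy terms by constants individually: a factor curve may be a very short loop on the sphere, in which case its length tends to $0$ while its curvature integral blows up, and the naive bound $\int(k_1^2+2)\,ds\geq 4\pi$ from Lemma \ref{lemma} is wasted. The resolution, and the heart of the argument, is to pair each length with its own curvature integral \emph{before} estimating, which makes the resulting quantity scale-invariant so that Fenchel applies uniformly; the strict inequality $2>1$ under the square root is exactly what upgrades the non-strict Fenchel estimate to the strict conclusion $W(y)>2\pi^2$, in agreement with Example \ref{example-inf}, where the value $2\pi^2$ is approached in the limit $a\to0$ but never attained.
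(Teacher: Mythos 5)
Your argument is correct, and it rests on the same two pillars as the paper's own proof: Fenchel's theorem applied to each factor curve, and AM--GM to combine the two contributions. The difference is only in the order of operations. The paper applies AM--GM \emph{pointwise} to the integrand, writing $2+k_1^2+\hat k_1^2=(1+k_1^2)+(1+\hat k_1^2)\geq 2\sqrt{(1+k_1^2)(1+\hat k_1^2)}$, so that the double integral factors immediately into a product of two Fenchel integrals $\int\sqrt{k_1^2+1}\,ds\cdot\int\sqrt{\hat k_1^2+1}\,d\hat s\geq 4\pi^2$, with strictness obtained up front by discarding half of the constant term ($4>2$). You instead integrate first (Fubini), which forces an extra Cauchy--Schwarz step $L\,P\geq\bigl(\int\sqrt{k_1^2+2}\,ds\bigr)^2$ on each curve before Fenchel can be invoked, and you then apply AM--GM to the two resulting scalars; your strictness comes from the pointwise inequality $\sqrt{k_1^2+2}>\sqrt{k_1^2+1}$. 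Both routes are sound and of comparable length; the paper's pointwise AM--GM is marginally more economical since it avoids Cauchy--Schwarz altogether, while your version isolates a clean scale-invariant quantity $L\,P$ per curve, which makes the ``short loop with large curvature'' worry you raise at the end visibly harmless. Your closing remark that the bound is sharp but unattained, consistent with the limiting family in Example \ref{example-inf}, matches the paper's discussion.
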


\begin{proof}
By \eqref{eq-kappa-wp} and Fenchel Theorem, we have that
\begin{equation*}
\begin{split}
W(y)&=\frac{1}{4}\iint_{\gamma\otimes\hat\gamma}\left(4+k_1^2+\hat{k}_1^2\right)dsd\hat{s}\\
&>\frac{1}{4}\iint_{\gamma\otimes\hat\gamma}\left(2+k_1^2+\hat{k}_1^2\right)dsd\hat{s}\\
&\geq \frac{1}{2}\int_{\hat\gamma}\sqrt{\hat k_1^2+1}d\hat{s} \int_{\gamma} \sqrt{k^1+1}ds\\
&\geq 2\pi^2. \\
\end{split}\end{equation*}

\end{proof}

\section{Constrained Willmore surfaces of tensor product}

 This section provides all constrained Willmore surfaces of tensor product, which is a generalization of Theorem 1.1 of \cite{Li-V}.
 Applying to Willmore surfaces, we re-obtain Theorem 1.1 of \cite{Li-V}, from the version of conformal geometry, which also indicates that the Ejiri torus is a Willmore torus \cite{Ejiri1982}.

We recall that an immersion from a Riemann surface $M$ into $S^n$ is called a {\em constrained Willmore surface } if it is critical surface of Willmore functional under all variations fixing the conformal structure of $M$ \cite{BoPP}, \cite{BPP}. It is known that a surface $y$ is constrained Willmore if and only if there exists some holomorphic function $q$ such that the Hopf differential $\kappa$ and the Schwarzian $c$ satisfy the following equation (\cite{BoPP}, \cite{BPP})
\begin{equation}\label{eq-CWillmore}
  D_{\bar{z}}D_{\bar{z}}\kappa+\frac{\bar c}{2}\kappa=Re(q\kappa).
\end{equation}

We retain the notations in Section 2 for a torus $y$ of tensor product. To begin with,
set
\begin{equation}\left\{
\begin{split}
&E_j=(0,\beta_j\otimes\hat\gamma),j=2,\cdots,n-1;\\
&F_j=(0,\gamma\otimes\hat\beta_j),j=2,\cdots,m-1;\\
&G_{jk}=(0,\beta_j\otimes\hat\beta_k),j=0,\cdots,n,\ k=0,\cdots,m;\\
&L_1=(0,\beta_1\otimes\hat\gamma)+\frac{k_1}{2}(1,\gamma\otimes\hat\gamma);\\
&L_2=(0,\gamma\otimes\hat\beta_1)+\frac{\hat{k}_1}{2}(1,\gamma\otimes\hat\gamma).\\
\end{split}\right.
\end{equation}
It is easy to see that they provide an orthogonal basis of the conformal normal bundle of $Y$. By this framing, we have that from \eqref{eq-yzz}
\begin{equation}\label{eq-kappa-wp}
\kappa=\frac{k_1}{4}L_1-\frac{\hat{k}_1}{4}L_2-\frac{i}{2}G_{00}.\end{equation}
Direct computations show that
\begin{equation}\label{eq-dz}\left\{
\begin{split}
&D_{\bar{z}}L_1=\frac{k_2}{2}E_2+\frac{i}{2}G_{10},\\
&D_{\bar{z}}L_2=\frac{i\hat{k}_2}{2}F_2+\frac{1}{2}G_{01},\\
&D_{\bar{z}}G_{00}=\frac{k_1}{2}G_{10}+\frac{i\hat{k}_1}{2}G_{01},\\
&D_{\bar{z}}G_{10}=-\frac{k_1}{2}G_{00}+\frac{k_2}{2}G_{20}-\frac{i}{2}L_{1}+\frac{i\hat{k}_1}{2}G_{11},\\
&D_{\bar{z}}G_{01}=-\frac{i\hat{k}_1}{2}G_{00}+\frac{i\hat{k}_2}{2}G_{02}-\frac{1}{2}L_{2}+\frac{k_1}{2}G_{11},\\
&D_{\bar{z}}E_2=\frac{k_3}{2}E_3-\frac{k_2}{2}L_1+\frac{i}{2}G_{20},\\
&D_{\bar{z}}F_2=\frac{\hat{k}_3}{2}F_3-\frac{i\hat{k}_2}{2}L_1+\frac{1}{2}G_{02}.\\
\end{split}\right.
\end{equation}
So
\[
\begin{split}
D_{\bar{z}}\kappa&=\frac{k_{1s}}{8}L_1-\frac{i\hat{k}_{1\bar s}}{8}L_2+\frac{k_1}{4}D_{\bar z}L_1-\frac{\hat{k}_1}{4}D_{\bar z}L_2-\frac{i}{2}D_{\bar z}G_{00}\\
&=\frac{k_{1s}}{8}L_1-\frac{i\hat{k}_{1\bar s}}{8}L_2+\frac{k_1k_2}{8} E_2+\frac{ik_1}{8}G_{10}-\frac{i\hat{k}_1\hat{k}_2}{ 8} F_2-\frac{\hat{k}_1}{8}G_{01}-\frac{ik_1}{4}G_{10}+\frac{\hat{k}_1}{4}G_{01}\\
&=\frac{k_{1s}}{8}L_1-\frac{i\hat{k}_{1\bar s}}{8}L_2+\frac{k_1k_2}{8} E_2-\frac{ik_1}{8}G_{10}-\frac{i\hat{k}_1\hat{k}_2}{ 8} F_2+\frac{\hat{k}_1}{8}G_{01}.\\
\end{split}\]
From this and \eqref{eq-dz} we see that
\[\langle D_{\bar z}D_{\bar{z}}\kappa, G_{11}\rangle=\langle -\frac{ik_1}{8}D_{\bar z}G_{10}+\frac{\hat{k}_1}{8}D_{\bar{z}}G_{01}, G_{11}\rangle=\frac{k_1\hat{k}_1}{8}.\]
Since
$\langle \kappa,G_{11}\rangle=0,$ for any holomorphic function $q$,
we have
\[\langle D_{\bar z}D_{\bar{z}}\kappa+\frac{\bar{c}}{2}\kappa-Re(q\kappa), G_{11}\rangle=\frac{k_1\hat{k}_1}{8}.\]
The constrained Willmore equation $D_{\bar{z}\bar{z}}+\frac{\bar{c}}{2}\kappa=Re(q \kappa)$  then forces
\[k_1\hat{k}_1=0.\]
Without lose of generality, assume that $\hat{k}_1=0$, that is, $\hat{\gamma}$ is a great circle.  Now we obtain
\[c=\frac{k_1^2}{4},\ \kappa=\frac{k_1}{4}L_1-\frac{i}{2}G_{00},\ D_{\bar{z}}\kappa=\frac{k_{1s}}{8}L_1+\frac{k_1k_2}{8} E_2-\frac{ik_1}{8}G_{10}.
\]
So
\begin{equation}\label{eq-dzdzk}
\begin{split}
16D_{\bar{z}}D_{\bar{z}}\kappa&= k_{1ss}L_1+(k_{1}k_2)_{s} E_2-ik_{1s}G_{10}+ 2k_{1s}D_{\bar{z}}L_1+2k_1k_2D_{\bar{z}}E_2-2ik_1D_{\bar{z}}G_{10}  \\
&=  k_{1ss}L_1+(2k_{1s}k_2+k_1k_{2s}) E_2 +k_1k_2(k_3E_3-k_2L_1)+ik_1(k_1G_{00}+iL_{1})\\
&= (k_{1ss}-k_1k_2^2-k_1)L_1+(2k_{1s}k_2+k_1k_{2s}) E_2 +k_1k_2k_3E_3+ik_1^2G_{00}.\\
\end{split}\end{equation}
 Then the constrained Willmore equation $D_{\bar{z}\bar{z}}+\frac{\bar{c}}{2}\kappa=Re(q \kappa)$ now reads
\begin{equation}\left\{
\begin{split}
&k_1''-k_1k_2^2-k_1+\frac{k_1^3}{2}=Re(\frac{qk_1}{4}),\\
&2k_1'k_2+k_1k_2'=0,\\
&0=Re(iq),\\
&k_1k_2k_3=0.\\
\end{split}\right.
\end{equation}
Since $q$ is holomorphic and $Im(q)=0$, $q$ is a constant real number. Set $q=q_1\in \R$. We obtain
\begin{equation}\left\{
\begin{split}
&k_1''-k_1k_2^2-k_1+\frac{k_1^3}{2}=\frac{q_1k_1}{4},\\
&2k_1'k_2+k_1k_2'=0,\\
&k_1k_2k_3=0.\\
\end{split}\right.
\end{equation}
In a sum, we have proved the following theorem.
\begin{theorem} The tensor product surface $y=\gamma\otimes\hat\gamma$ is a constrained Willmore surface if and only if one of $\gamma$ and $\hat\gamma$ is the great circle $S^1$, say, for example, $
\hat\gamma=S^1$, and the other one, say, $\gamma$, is a curve in some $S^3(1)$ satisfying
the following equations
\begin{equation}\label{eq-elastic2}\left\{
\begin{split}
&k_1''-k_1k_2^2-a_0k_1+\frac{k_1^3}{2}=0,\\
&2k_1'k_2+k_1k_2'=0.\\
\end{split}\right.
\end{equation}
Here $a_0\in \R$ is a constant. Moreover, $y$ is Willmore if and only if $a_0=1$.
\end{theorem}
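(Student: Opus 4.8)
The plan is to compute the three quantities that enter the constrained Willmore equation \eqref{eq-CWillmore} --- namely $\kappa$, $D_{\bar{z}}D_{\bar{z}}\kappa$, and $Re(q\kappa)$ --- in the explicit orthonormal frame $\{E_j, F_j, G_{jk}, L_1, L_2\}$ adapted to the Frenet data of the two factor curves, and then read off the equation direction by direction. First I would record, from \eqref{eq-yzz}, that $\kappa = \frac{k_1}{4}L_1 - \frac{\hat{k}_1}{4}L_2 - \frac{i}{2}G_{00}$, so that $\kappa$ lives in a small, explicitly controlled subbundle, and assemble the covariant derivative formulas \eqref{eq-dz} for the frame sections.

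The decisive step is to test the equation against the single direction $G_{11}=(0,\beta_1\otimes\hat\beta_1)$. Since $\langle\kappa,G_{11}\rangle=0$ and $G_{11}$ is real, neither $\kappa$ nor $Re(q\kappa)$ carries a $G_{11}$-component, so \eqref{eq-CWillmore} forces $\langle D_{\bar{z}}D_{\bar{z}}\kappa, G_{11}\rangle = 0$. Pushing the second normal derivative through \eqref{eq-dz}, only the $G_{10}$- and $G_{01}$-terms of $D_{\bar{z}}\kappa$ feed back into $G_{11}$, and their contributions add up to $\frac{k_1\hat{k}_1}{8}$. Hence $k_1\hat{k}_1\equiv 0$. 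This is what breaks the symmetry between the two factors and shows that one of them, say $\hat\gamma$, is a great circle ($\hat{k}_1\equiv 0$).

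With $\hat{k}_1=0$ the invariants collapse to $c=k_1^2/4$ and $\kappa=\frac{k_1}{4}L_1-\frac{i}{2}G_{00}$, and I would recompute $D_{\bar{z}}D_{\bar{z}}\kappa$ as in \eqref{eq-dzdzk}, expressed in the basis $\{L_1, E_2, E_3, G_{00}\}$. Matching the four components of \eqref{eq-CWillmore} then yields in turn: the $G_{00}$-component gives $Re(iq)=0$, i.e. $Im(q)=0$, so the holomorphic $q$ is a real constant $q_1$; the $E_3$-component gives $k_1k_2k_3=0$; and the $L_1$- and $E_2$-components give the two ODEs. Writing $a_0:=1+q_1/4$ absorbs the constant and produces exactly \eqref{eq-elastic2}, with the Willmore case $q=0$ corresponding to $a_0=1$. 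To finish the geometric conclusion I would integrate the second ODE to $k_1^2 k_2 = \text{const}$: if this constant is nonzero then $k_1,k_2\neq 0$ everywhere, forcing $k_3\equiv 0$, while if it vanishes then $k_1\equiv 0$ or $k_2\equiv 0$; in every case the Frenet equations confine $\gamma$ to a four-dimensional subspace, i.e. to some $S^3(1)$.

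I expect the main obstacle to be purely computational: carrying $D_{\bar{z}}D_{\bar{z}}\kappa$ correctly through the coupled derivative formulas \eqref{eq-dz}, since a single sign slip there would corrupt both the $k_1\hat{k}_1=0$ reduction and the final ODE system. The conceptual content, by contrast, is concentrated in the clean $G_{11}$-test, which converts the whole tensor-product rigidity into an elementary vanishing statement.
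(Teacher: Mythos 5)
Your proposal is correct and follows essentially the same route as the paper: the same adapted frame $\{E_j,F_j,G_{jk},L_1,L_2\}$, the same decisive pairing of the constrained Willmore equation with $G_{11}$ to force $k_1\hat{k}_1=0$, and the same component-by-component matching (in $L_1$, $E_2$, $E_3$, $G_{00}$) after setting $\hat{k}_1=0$, with $a_0=1+q_1/4$. The only addition is your closing argument integrating $2k_1'k_2+k_1k_2'=0$ to place $\gamma$ in an $S^3$, a detail the paper leaves implicit.
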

Note that \eqref{eq-elastic2} is exactly the special case $\lambda=a_0$ and $G=0$ in (1.3) of \cite{Langer-Singer1984}. We refer to Section 2 of \cite{Langer-Singer1984} and Section 3-5 of \cite{Li-V} for more discussions on the solutions to \eqref{eq-elastic2} and corresponding constrained Willmore surfaces.  We also refer to \cite{Heller} for another kind of relationships between constrained Willmore surfaces and elastic curves.

It is easy to see that in this case $k_1$ and $k_2$ being constant yields all homogeneous constrained Willmore surfaces of tensor product. So we obtain
\begin{corollary}
Let $y=\gamma\otimes\hat\gamma$ be a  homogeneous constrained Willmore surface of tensor product.
Set  $\hat\gamma =     \left( \cos \hat s , \sin \hat s\right).$ Then $\gamma$ is of the form
\begin{equation}\label{eq-homo-c-W}
     \gamma =     \left(a\cos \frac{  s}{\sqrt{a^2+b^2\lambda^2}},a\sin  \frac{s}{\sqrt{a^2+b^2\lambda^2 }},b \cos  \frac{\lambda s}{\sqrt{a^2+b^2\lambda^2}}, b \sin \frac{\lambda  s}{\sqrt{a^2+b^2\lambda^2}}\right),
\end{equation}
with
\[k_1^2=\frac{a^2b^2(\lambda^2-1)^2 }{ (a^2+b^2\lambda^2)^2},~\hbox{
and }\
  k_2^2=\frac{ \lambda^2}{ (a^2+b^2\lambda^2)^2} \hbox { when } k_1\neq0. \]
\end{corollary}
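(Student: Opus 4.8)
The plan is to specialize the preceding theorem to the homogeneous case and then exhibit the curve explicitly. Following the discussion just above the corollary, homogeneity of $y=\gamma\otimes\hat\gamma$ means precisely that the curvatures $k_1,k_2$ of the non-circular factor $\gamma$ are constant (the factor $\hat\gamma$ being already a great circle, and $k_3\equiv0$ since $\gamma\subset S^3(1)$). With $k_1,k_2$ constant the second equation of \eqref{eq-elastic2} holds automatically, while the first reduces to the algebraic identity
\[
k_1\left(\tfrac{1}{2}k_1^2-k_2^2-a_0\right)=0 .
\]
Hence $k_1\equiv0$ recovers the great circle, and when $k_1\neq0$ the constrained Willmore condition imposes no restriction beyond constancy of the curvatures: it merely fixes $a_0=\tfrac12 k_1^2-k_2^2$. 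What remains is thus the purely geometric task of parametrizing the curves of $S^3$ with prescribed constant curvatures.

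For this I would invoke the fundamental theorem of curves in a space form: a curve in $S^3$ is determined up to an ambient isometry by its curvature functions $k_1,k_2$ (with $k_3\equiv0$). It therefore suffices to produce, for each admissible constant pair, one representative, and the natural candidates are the orbits of the maximal torus of $SO(4)$, i.e.\ the products of two circles traversed at constant angular speeds,
\[
\gamma(s)=\bigl(a\cos(\alpha s),\,a\sin(\alpha s),\,b\cos(\lambda\alpha s),\,b\sin(\lambda\alpha s)\bigr),\qquad a^2+b^2=1 .
\]
Imposing unit speed, $|\gamma'|^2=\alpha^2(a^2+b^2\lambda^2)=1$, fixes $\alpha=(a^2+b^2\lambda^2)^{-1/2}$ and yields exactly the parametrization \eqref{eq-homo-c-W}.

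Finally I would compute the Frenet invariants of \eqref{eq-homo-c-W} and match them with the stated formulas. Differentiating and using $\beta_0'+\gamma=k_1\beta_1$, the identity $a^2+b^2=1$ gives $1-\alpha^2=b^2(\lambda^2-1)/(a^2+b^2\lambda^2)$ and $1-\lambda^2\alpha^2=-a^2(\lambda^2-1)/(a^2+b^2\lambda^2)$, whence
\[
k_1^2=|\beta_0'+\gamma|^2=\frac{a^2b^2(\lambda^2-1)^2}{(a^2+b^2\lambda^2)^2}.
\]
Setting $\beta_1=(\beta_0'+\gamma)/k_1$ and using the next Frenet equation $\beta_1'+k_1\beta_0=k_2\beta_2$ together with $|\beta_2|=1$, the quantity $|\beta_1'+k_1\beta_0|^2$ can be evaluated directly; after substituting the expressions for $1-\alpha^2$ and $1-\lambda^2\alpha^2$ and repeatedly invoking $a^2+b^2=1$, the two scalar amplitudes collapse and one obtains $k_2^2=\lambda^2/(a^2+b^2\lambda^2)^2$. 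This last collapse is the only mildly delicate point, since it is where the clean cancellations depend on the normalization $a^2+b^2=1$; everything else is immediate. The surjectivity of the map $(a,\lambda)\mapsto(k_1,k_2)$ onto the admissible range then shows that \eqref{eq-homo-c-W} captures, up to an isometry of $S^3$, every homogeneous $\gamma$, which completes the proof.
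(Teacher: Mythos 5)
Your proposal is correct and follows essentially the same route as the paper, which disposes of the corollary with the single remark that homogeneity amounts to constancy of $k_1,k_2$ and then records the resulting torus-orbit parametrization; your explicit Frenet computation (the identities $1-\alpha^2=b^2(\lambda^2-1)/(a^2+b^2\lambda^2)$, $1-\lambda^2\alpha^2=-a^2(\lambda^2-1)/(a^2+b^2\lambda^2)$ and the resulting values of $k_1^2,k_2^2$) checks out and simply fills in what the paper calls ``easy to see.'' The only point you assert rather than verify is the surjectivity of $(a,\lambda)\mapsto(k_1,k_2)$, but this is a minor omission and does not affect the argument.
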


This corollary indicates that not all homogeneous tori of tensor product are constrained Willmore surfaces.\\

Concerning the stability of constrained Willmore surface of tensor product, noticing in Example \ref{example-unstable} the conformal structure of tori are in fact invariant under the variation, we have the following
\begin{theorem}\label{th-main-2} Let $y=\gamma\otimes\hat\gamma$ be a constrained Willmore torus of tensor product. Then it is unstable.
\end{theorem}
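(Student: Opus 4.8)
The plan is to reuse the one-parameter family $y_a$ constructed in Example \ref{example-unstable} and to observe that, in contrast with the unconstrained situation of Theorem \ref{th-main-3}, this family is in fact \emph{admissible} for the constrained variational problem because it leaves the conformal class of the torus fixed. Granting this, the fact (established in Example \ref{example-unstable}) that $W(y_a)$ strictly increases to its maximum as $a\to 1$ immediately produces a conformal-structure-preserving direction along which the Willmore energy drops, so that $y=y_1$ cannot be a local minimum of $W$ within its conformal class and is therefore unstable as a constrained Willmore surface. Notice that this argument does not even use the classification of constrained Willmore tensor products; it applies to every tensor-product torus and only uses that $y$ is a critical point of the constrained problem.

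The first and main step is to pin down the conformal class of a tensor-product torus and to check its invariance under the deformation. If $\gamma,\hat\gamma$ are unit-speed closed curves of lengths $L,\hat L$, then by the tensor-product inner-product lemma of \cite{Li-V} one computes $\langle y_s,y_s\rangle=\langle\beta_0,\beta_0\rangle\langle\hat\gamma,\hat\gamma\rangle=1$, $\langle y_{\hat s},y_{\hat s}\rangle=1$, and $\langle y_s,y_{\hat s}\rangle=\langle\beta_0,\gamma\rangle\langle\hat\gamma,\hat\beta_0\rangle=0$, so the induced metric is the flat product metric $ds^2+d\hat s^2$. Hence the conformal class of $y=\gamma\otimes\hat\gamma$ is that of the rectangular torus $\C/(L\mathbb{Z}+i\hat L\mathbb{Z})$, determined by the single modulus $\tau=i\hat L/L$. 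In the family $y_a=\gamma_a\otimes\hat\gamma_a$ the factors $\gamma_a(\theta)=(a\gamma(\theta/a),b)$ and $\hat\gamma_a(\hat\theta)=(a\hat\gamma(\hat\theta/a),b)$ are again unit-speed, now of lengths $aL$ and $a\hat L$; repeating the computation, $y_a$ carries the flat metric $d\theta^2+d\hat\theta^2$ and its modulus is $i(a\hat L)/(aL)=i\hat L/L=\tau$, independent of $a$. Thus the whole family $\{y_a\}$ lies in one fixed conformal class, so it is an admissible variation for the constrained Willmore functional.

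To turn this into a clean second-variation statement I would reparametrize the family so that it is smooth through $a=1$. Since $b=\sqrt{1-a^2}$ is not differentiable there, set $a=\cos\phi$, $b=\sin\phi$, obtaining a smooth path $\phi\mapsto y_\phi$ into the fixed ambient sphere $S^{(n+2)(m+2)-1}$ with $y_0=y$. By Example \ref{example-unstable} the energy has the form $W(y_a)=\alpha a^2+\beta$ with $\alpha=\iint_{\gamma\otimes\hat\gamma}ds\,d\hat s>0$, so $W(y_\phi)=\alpha\cos^2\phi+\beta$. Because $y$ is constrained Willmore and the path $y_\phi$ stays in the conformal class of $y$, the first variation $\frac{d}{d\phi}W(y_\phi)\big|_{\phi=0}=-\alpha\sin(2\phi)\big|_{\phi=0}=0$ vanishes, as it must, while the second variation $\frac{d^2}{d\phi^2}W(y_\phi)\big|_{\phi=0}=-2\alpha<0$ is strictly negative. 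Hence the Hessian of $W$ restricted to the conformal class of $y$ fails to be positive semidefinite, which is precisely the instability of $y$ as a constrained Willmore torus (once the codimension is large enough for the deformation to be realized, in line with Theorem \ref{th-main-3}).

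The only genuinely new ingredient beyond Theorem \ref{th-main-3} is the conformal-invariance check in the second paragraph, and this is where I expect the main subtlety to lie: one must verify not merely that each $y_a$ is a flat torus, but that the ratio of the two periods — the quantity that actually records the conformal modulus — is left unchanged by the rescaling $\gamma\mapsto(a\gamma(\cdot/a),b)$. Everything else is a formal consequence of the energy computation already carried out in Example \ref{example-unstable}, together with the reparametrization by $\phi$ that restores differentiability at the endpoint $a=1$ and legitimizes the passage from ``$W(y_a)\le W(y_1)$ with equality iff $a=1$'' to a strictly negative second variation.
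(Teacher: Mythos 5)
Your proposal is correct and follows essentially the same route as the paper, which proves this theorem by observing that the deformation of Example \ref{example-unstable} preserves the conformal structure of the torus while strictly decreasing the Willmore energy away from $a=1$. The details you supply — the explicit check that the flat modulus $i\hat L/L$ is unchanged under the rescaling, and the reparametrization $a=\cos\phi$ yielding a genuinely negative second variation — are exactly the points the paper leaves implicit.
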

\begin{remark}
Some related results on the stability of constrained Willmore surface can be found in \cite{KL, NS}.\\
\end{remark}

In the end, let us go back to Willmore surfaces. Setting $q_1=0$ in \eqref{eq-elastic2}, we re-obtain the theorem due to Li and Vrancken in \cite{Li-V}
\begin{theorem}\cite{Li-V} The tensor product surface $y=\gamma\otimes\hat\gamma$ is a Willmore surface if and only if one of $\gamma$ and $\hat\gamma$ is the great circle $S^1$, say, for example, $
\hat\gamma=S^1$, and the other one, say, $\gamma$, is a curve in some $S^3$ satisfying
the following equations
\begin{equation}\label{eq-elastic}\left\{
\begin{split}
&k_1''-k_1k_2^2-k_1+\frac{k_1^3}{2}=0,\\
&2k_1'k_2+k_1k_2'=0.\\
\end{split}\right.
\end{equation}
\end{theorem}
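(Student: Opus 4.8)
The plan is to observe that the Willmore condition \eqref{eq-willmore} of Theorem \ref{thm-willmore} is nothing but the constrained Willmore equation \eqref{eq-CWillmore} in the degenerate case where the holomorphic multiplier $q$ vanishes, so that $\mathrm{Re}(q\kappa)=0$. Under this identification every framing and structure-equation computation already carried out for constrained Willmore tori of tensor product applies \emph{verbatim} with $q=0$, and no new calculation is required. Concretely, I would recall the expressions \eqref{eq-dz}--\eqref{eq-dzdzk} for $\kappa$, $D_{\bar z}\kappa$ and $D_{\bar z}D_{\bar z}\kappa$ in the tensor-product framing, and simply specialize the analysis.

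For the forward direction I would first pair the Willmore operator $D_{\bar z}D_{\bar z}\kappa+\frac{\bar c}{2}\kappa$ against $G_{11}$. Since $\langle\kappa,G_{11}\rangle=0$, the $G_{11}$-component equals $\frac{k_1\hat k_1}{8}$, exactly as in the constrained case but now with the right-hand side absent; the Willmore equation then forces $k_1\hat k_1=0$, so without loss of generality $\hat k_1\equiv 0$ and $\hat\gamma$ is a great circle $S^1$. With $\hat\gamma=S^1$ fixed, I would read off the remaining normal components from \eqref{eq-dzdzk}, noting that the contribution $\frac{\bar c}{2}\kappa=\frac{k_1^2}{8}\kappa$ precisely cancels the $ik_1^2G_{00}$ term. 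The surviving $L_1$-, $E_2$- and $E_3$-components then give $k_1''-k_1k_2^2-k_1+\frac{k_1^3}{2}=0$, $2k_1'k_2+k_1k_2'=0$, and $k_1k_2k_3=0$. The last relation forces $k_3\equiv0$ on the open set where $k_1k_2\neq0$ and hence places $\gamma$ in a totally geodesic $S^3$, while the first two are exactly the system \eqref{eq-elastic}; equivalently this is \eqref{eq-elastic2} with the constant $a_0=1+\frac{q_1}{4}$ evaluated at $q_1=0$.

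The converse follows by reversing this computation: substituting a solution of \eqref{eq-elastic} (with $\hat\gamma=S^1$ and $\gamma\subset S^3$) back into \eqref{eq-dzdzk} makes each normal component of $D_{\bar z}D_{\bar z}\kappa+\frac{\bar c}{2}\kappa$ vanish, so $y$ satisfies \eqref{eq-willmore} and is Willmore. Because all of the delicate moving-frame bookkeeping has already been discharged in the constrained-Willmore derivation, there is essentially no remaining obstacle; the only point I would treat with a little care is the passage from the pointwise identity $k_1k_2k_3=0$ to the global statement that $\gamma$ lies in a fixed $S^3$, which is handled by a standard continuity argument for the solutions of the elastic-curve ODE \eqref{eq-elastic}.
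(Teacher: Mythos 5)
Your proposal is correct and follows essentially the same route as the paper: the paper obtains this theorem by setting $q_1=0$ in the constrained Willmore system \eqref{eq-elastic2}, whose derivation via the frame $\{L_1,L_2,E_j,F_j,G_{jk}\}$ and the $G_{11}$-pairing is exactly the computation you recapitulate. Your extra remarks (the cancellation of the $ik_1^2G_{00}$ term against $\frac{\bar c}{2}\kappa$, the identification $a_0=1+\frac{q_1}{4}$, and the care needed to pass from $k_1k_2k_3=0$ to $\gamma\subset S^3$) are all consistent with the paper's argument.
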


Note that the above equations are exactly the equations of free elastic curves in the hyperbolic space $H^{3}(-1)$ \cite{Langer-Singer1984, Langer-Singer-BLMS, Li-V}.\\

\section{Unstability of the Ejiri torus}

Let
\begin{equation}y_{\theta}(s,\hat{s})=\left(a_1\cos(s+\hat{s}), a_1\sin(s+\hat{s}), a_2\cos(s-\hat{s}), a_2\sin(s-\hat{s}),a_3\cos\hat{s}, a_3\sin\hat{s}\right)\end{equation}
with
\[a_1=\sqrt{\frac{1}{3}}\cos\theta,\ a_2=\sqrt{\frac{1}{3}}\sin\theta,\  \hbox{ and }  a_3=\sqrt{\frac{2}{3}}.\]
Then $y_{\theta}$, with  $\theta\in[0,\frac{\pi}{2}]$, provides one family of tori with $(s,\hat{s})\in[0,2\pi]\times[0,2\pi]$ in $S^5$. In particular,
   $y_{\theta}|_{\theta=\frac{\pi}{4}}$ gives the Ejiri torus (up to an isometry of $S^5$). We will show that the Ejiri torus is unstable by computation of the Willmore functional of $y_{\theta}$.

First we use the coordinate changing
\[\phi=\frac{1}{\sqrt{3}}(s+\hat{b}_1\hat{s}), \ \  \varphi=\frac{1}{b_3 }\hat{s},\]
such that
\[s+\hat{s}=\sqrt{3}\phi+b_1\varphi,\ s-\hat{s}=\sqrt{3}\phi-b_2\varphi. \ \]
Here \[ \ b_1= 2b_3\sin^2\theta , \ b_2= 2b_3\cos^2\theta, \ b_3=\sqrt{\frac{3}{2+\sin^2 2\theta}},\ \hbox { and } \hat{b}_1=1-\frac{b_1}{b_3}.\]
We compute now
\[y_{\theta, \phi}=\sqrt{3}\left(-a_1\sin(s+\hat{s}), a_1\cos(s+\hat{s}), -a_2\sin(s-\hat{s}), a_2\cos(s-\hat{s}),0, 0\right),\]
\[y_{\theta, \varphi}= \left(-a_1b_1\sin(s+\hat{s}), a_1b_1\cos(s+\hat{s}), a_2b_2\sin(s-\hat{s}), -a_2b_2\cos(s-\hat{s}),-a_3b_3\sin\hat{s}, a_3b_3\cos\hat{s}\right).\]
It is direct to see that $z=\phi+i\varphi$ is a complex coordinate of $y_{\theta}$ since
\[|y_{\theta, \phi}|^2=1,\ |y_{\theta, \varphi}|^2=a_1^2b_1^2+a_2^2b_2^2+a_3^3b_3^2=b_3^2\left(\frac{4\cos^2\theta\sin^2\theta}{3}+\frac{2}{3}\right)=1,\
\langle y_{\theta, \phi},y_{\theta, \varphi}\rangle=a_1^2b_1-a_2^2b_2=0.\]
We also have
\[y_{\theta, \phi\phi}=-3\left(a_1\cos(s+\hat{s}), a_1\sin(s+\hat{s}), a_2\cos(s-\hat{s}), a_2\sin(s-\hat{s}),0, 0\right),\]
\[y_{\theta, \varphi\varphi}=-\left(a_1b_1^2\cos(s+\hat{s}), a_1b_1^2\sin(s+\hat{s}), a_2b_2^2\cos(s-\hat{s}), a_2b_2^2\sin(s-\hat{s}),a_3b_3^2\cos\hat{s}, a_3b_3^2\sin\hat{s}\right).\]
So
\[|y_{\theta, \phi\phi}|^2=3,\ |y_{\theta, \varphi\varphi}|^2=a_1^2b_1^4+a_2^2b_2^4+a_3^2b_3^4=\frac{b_3^4}{3}(16(\cos^2\theta\sin^8\theta+\cos^8\theta\sin^2\theta)+2),\]
and
\[ \langle y_{\theta, \phi\phi},y_{\theta, \varphi \varphi}\rangle=3a_1^2b_1^2+3a_2^2b_2^2=b_3^2\sin^2 2\theta.\]
Therefore we obtain
\[\begin{split}|\vec{H}|^2+1&=\frac{1}{4}\left(|y_{\theta, \phi\phi}|^2+ |y_{\theta, \varphi\varphi}|^2+2\langle y_{\theta, \phi\phi},y_{\theta, \varphi \varphi}\rangle\right)\\
&=\frac{1}{4}\left(
3+\frac{b_3^4}{3}(16(\cos^2\theta\sin^8\theta+\cos^8\theta\sin^2\theta)+2)+2b_3^2\sin^22\theta\right)\end{split}\]
We have now
\[\begin{split}W(y_{\theta})&=\int_M(|\vec{H}|^2+1)dM\\
&=\int_0^{2\pi}\int_0^{2\pi}(|\vec{H}|^2+1)\frac{1}{\sqrt{3}b_3}ds d\hat{s}\\
&=\frac{\pi^2\sqrt{2+\sin^22\theta}}{3}\left(
3+\frac{b_3^4}{3}(2+16(\cos^2\theta\sin^8\theta+\cos^8\theta\sin^2\theta))+2b_3^2\sin^2 2\theta\right)\\
&= \pi^2\sqrt{2+\sin^22\theta} \left(1+\frac{1}{(2+\sin^22\theta)^2}\left(2+4\sin^22\theta-3\sin^42\theta)\right)+\frac{2\sin^22\theta}{2+\sin^22\theta}\right)
\end{split}\]
Let $\rho=\sqrt{2+\sin^22\theta}$. So $\sin^22\theta=\rho^2-2$ and
\[\begin{split}W(y_{\theta})&= \pi^2\rho \left(1+\frac{1}{\rho^4}\left(2+4(\rho^2-2)-3(\rho^2-2)^2)\right)+\frac{2(\rho^2-2)}{\rho^2}\right)\\
&= \pi^2\frac{1}{\rho^3}\left(12\rho^2-18\right)\\
&= 6\pi^2\left(\frac{2}{\rho}-\frac{3}{\rho^3}\right).\\
\end{split}\]
Since $ \frac{2}{\rho}-\frac{3}{\rho^3}$ is an increasing function for $\rho\in[\sqrt{2},\sqrt{3}]$, the Ejiri torus attains maximal Willmore functional. Hence we obtain
\begin{proposition} The Ejiri torus is unstable in $S^5$.
\end{proposition}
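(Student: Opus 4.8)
The plan is to exhibit the explicit one-parameter family $y_\theta$, $\theta\in[0,\tfrac{\pi}{2}]$, recorded in the statement, which passes through the Ejiri torus at $\theta=\tfrac{\pi}{4}$, and to show that along this family the Willmore functional has a strict local maximum at the Ejiri torus. Since the Ejiri torus is a Willmore surface (hence a critical point of $W$), a variation along which $W$ strictly decreases to both sides forces the second variation to be strictly negative in that direction, which is precisely instability. First I would record the family with $a_1=\sqrt{1/3}\cos\theta$, $a_2=\sqrt{1/3}\sin\theta$, $a_3=\sqrt{2/3}$; a one-line check that the sum of the squared coordinates equals $1$ confirms $y_\theta(S^1\times S^1)\subset S^5$, and at $\theta=\tfrac{\pi}{4}$ an orthogonal change of the ambient frame identifies $y_{\pi/4}$ with the Ejiri torus \eqref{eq-ejiri-t}.

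The next step is to produce a conformal coordinate, since $(s,\hat s)$ is not conformal for generic $\theta$. I would pass to $\phi=\tfrac{1}{\sqrt3}(s+\hat b_1\hat s)$, $\varphi=\tfrac{1}{b_3}\hat s$, choosing the constants $b_1,b_2,b_3,\hat b_1$ exactly as in the statement so that $z=\phi+i\varphi$ becomes conformal. Concretely I would compute $y_{\theta,\phi}$ and $y_{\theta,\varphi}$ and verify $|y_{\theta,\phi}|^2=|y_{\theta,\varphi}|^2=1$ together with $\langle y_{\theta,\phi},y_{\theta,\varphi}\rangle=0$: the vanishing cross term reduces to $a_1^2 b_1=a_2^2 b_2$, while the equal-length condition fixes $b_3=\sqrt{3/(2+\sin^2 2\theta)}$. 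These two conformality equations are what force the displayed values of the constants.

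With a unit conformal coordinate in hand every $y_\theta$ is flat, so $K\equiv0$ and the Willmore integrand collapses to $|\vec H|^2+1=\tfrac14|y_{\theta,\phi\phi}+y_{\theta,\varphi\varphi}|^2$. I would then compute the three constant quantities $|y_{\theta,\phi\phi}|^2$, $|y_{\theta,\varphi\varphi}|^2$, $\langle y_{\theta,\phi\phi},y_{\theta,\varphi\varphi}\rangle$ and integrate, using the Jacobian $d\phi\,d\varphi=\tfrac{1}{\sqrt3\,b_3}\,ds\,d\hat s$ and the area $(2\pi)^2$ in $(s,\hat s)$. After the substitution $\rho=\sqrt{2+\sin^2 2\theta}$ everything telescopes to $W(y_\theta)=6\pi^2\!\left(\tfrac{2}{\rho}-\tfrac{3}{\rho^3}\right)$. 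Since $\tfrac{d}{d\rho}\!\left(\tfrac{2}{\rho}-\tfrac{3}{\rho^3}\right)=\tfrac{9-2\rho^2}{\rho^4}>0$ for $\rho^2\le3$, this is strictly increasing on $[\sqrt2,\sqrt3]$; as $\theta$ ranges over $[0,\tfrac{\pi}{2}]$ we have $\rho\in[\sqrt2,\sqrt3]$ with the maximal value $\rho=\sqrt3$ attained only at $\theta=\tfrac{\pi}{4}$ (and $\rho^2=3-4(\theta-\tfrac{\pi}{4})^2+\cdots$ there). Hence $W(y_\theta)$ has a strict local maximum at the Ejiri torus, so the second variation is strictly negative along this family and the Ejiri torus is unstable.

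\textbf{The main obstacle.} The principal technical burden is the conformal normalization, namely choosing $b_1,b_2,b_3,\hat b_1$ so that $z=\phi+i\varphi$ is genuinely conformal, and then carrying the resulting $\theta$-dependence cleanly through the second-derivative and integration steps so that the answer collapses to the single-variable rational function of $\rho$. Once this normalization and the substitution $\rho=\sqrt{2+\sin^2 2\theta}$ are in place, the monotonicity argument and the conclusion of instability are elementary.
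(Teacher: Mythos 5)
Your proposal is correct and follows essentially the same route as the paper's Section 5: the same family $y_\theta$, the same conformal reparametrization $z=\phi+i\varphi$ with the same constants $b_1,b_2,b_3,\hat b_1$, the same reduction of $W(y_\theta)$ to $6\pi^2\left(\frac{2}{\rho}-\frac{3}{\rho^3}\right)$ with $\rho=\sqrt{2+\sin^2 2\theta}$, and the same monotonicity argument showing the Ejiri torus ($\theta=\frac{\pi}{4}$, $\rho=\sqrt{3}$) is a strict interior maximum. The only additions beyond the paper are small but welcome explicit checks (the derivative $\frac{9-2\rho^2}{\rho^4}>0$ and the quadratic expansion of $\rho^2$ at $\theta=\frac{\pi}{4}$ confirming that the second variation is strictly negative).
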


\ \\

{\bf Acknowledgements}: The author is thankful to Professor Xiang Ma
for his many valuable discussions and suggestions. The author is
supported by  the Project 11201340 of NSFC  and
the Fundamental Research Funds for the Central Universities.

{\small
\def\refname{Reference}

}

\ \\\\
Peng Wang,\\
Department of Mathematics, \\
Tongji University, \\
Siping Road 1239, Shanghai, 200092,\\
 P. R. China.\\
{\em E-mail address}: {netwangpeng@tongji.edu.cn}

\end{document}